\definecolor{purple}{rgb}{0.3,0.0,.4}
\DeclareMathOperator{\var}{Var}
\numberwithin{equation}{section}
\theoremstyle{plain}
\newtheorem{thm}{Theorem}[section]
\newtheorem{lem}[thm]{Lemma}
\newtheorem*{remark}{Remark}
\newtheorem{definition}{Definition}[section]
\title{Tensor Random Projection for Low Memory Dimension Reduction}
\author{
Yiming Sun \thanks{Both authors contributed equally.} \\
Cornell \\
\And Yang Guo \footnotemark[1]\\
UW-Madison\\
\And Joel A.~Tropp \\
Caltech\\
\And Madeleine Udell\\
Cornell \\
  %% examples of more authors
  %% \And
  %% Coauthor \\
  %% Affiliation \\
  %% Address \\
  %% \texttt{email} \\
  %% \AND
  %% Coauthor \\
  %% Affiliation \\
  %% Address \\
  %% \texttt{email} \\
  %% \And
  %% Coauthor \\
  %% Affiliation \\
  %% Address \\
  %% \texttt{email} \\
  %% \And
  %% Coauthor \\
  %% Affiliation \\
  %% Address \\
  %% \texttt{email} \\
}
\begin{document}
% \nipsfinalcopy is no longer used

\maketitle

\begin{abstract}
Random projections reduce the dimension of a set of vectors
while preserving structural information,
such as distances between vectors in the set.
% Dimension reduction enables applications in information retrieval
% such as fast nearest neighbor search.
% For example, in information retrieval it could significantly reduce the storage cost
% in the database and boost the query efficiency.
This paper proposes a novel use of row-product random matrices \cite{rudelson2012row} in random projection, where we call it Tensor Random Projection (TRP). It requires substantially less memory than existing dimension reduction maps.
The TRP map is formed as the Khatri-Rao product of several smaller
random projections, and is compatible with any base random projection
including sparse maps, which enable dimension reduction with
very low query cost and no floating point operations.
% including Gaussian and sparse maps.
% One particularly useful TRP in the large database setting is the TRP of sparse maps,
% which requires very low query cost and no floating point operation to perform the projection.
We also develop a reduced variance extension.
We provide a theoretical analysis of the bias and variance of the TRP,
and a non-asymptotic error analysis for a TRP composed of two smaller maps.
Experiments on both synthetic and MNIST data show that
our method performs as well as conventional methods with substantially less storage.
\end{abstract}

\section{Introduction}
Random projections (RP) are commonly used to reduce the dimension of collections of
high dimensional vectors, enabling a broad range of modern applications
\cite{wright2009robust,buhler2002finding,allen2014sparse,bingham2001random,fradkin2003experiments, halko2011finding}.
In the context of large-scale relational databases, these maps enable
applications like information retrieval \cite{papadimitriou2000latent},
similarity search \cite{sahin2005prism,kaski1998dimensionality},
and privacy preserving distributed data mining \cite{liu2006random}. Consider the problem of detecting plagiarism.
We might attempt to solve this problem by comparing the similarity of
word-level n-gram profiles for different pairs of documents \cite{barron2009automatic}.
To avoid tremendous query cost of this procedure,
which scales quadratically with the number of documents,
we may instead reduce the dimension of the data vector with a random projection,
and cluster the resulting low-dimensional vectors.
However, if the dimension of the vectors before reduction
(here, the size of the lexicon) is too big,
the storage cost of the random map is not negligible.
Furthermore, even generating the pseudo-random numbers used to produce the random projection
is expensive \cite{matsumoto1998mersenne}.

To reduce the storage burden, we propose a novel use of the row-product random matrices in random projection, and call it the \textit{Tensor Random Projection} (TRP),
formed as the Khatri-Rao product of a list of smaller dimension reduction maps.
We show this map is an approximate isometry, with tunable accuracy,
and hence can serve as a useful dimension reduction primitive.
Furthermore, the storage required to compress $d$ dimension vectors scales as $\sqrt[N]d$
where $N$ is the number of smaller maps used to form the TRP.
We also develop a reduced variance version of the TRP that allows separate control
of the dimension of the range and the quality of the isometry.

\paragraph{Dimension Reduction Map}
A function $f$ from $\mathbb{R}^d \rightarrow \mathbb{R}^k (k\ll d)$ is
called a dimension reduction map (DRM) if it approximately preserves the pair-wise distances.
More precisely,
we call $f$ a \emph{$\epsilon$-Johnson-Lindenstrauss (JL) transform} if
for any $\epsilon>0$ and for any two points $\mathbf{u},\mathbf{v}$ in a discrete set
$\mathcal{X} \subseteq \mathbb{R}^n$, we have
\[
(1-\epsilon)\|\mathbf{u} -\mathbf{v}\|^2  \leq \|f(\mathbf{u}) - f(\mathbf{v}) \|^2 \leq  (1+\epsilon) \|\mathbf{u}- \mathbf{v}\|^2
\]
The well-known JL Lemma \cite{johnson1984extensions} claims for $k = O(\log(|\mathcal X|)/\epsilon^2)$,
an $\epsilon$-JL transform exists.
In fact, the proof shows that a suitable random linear map is an $\epsilon$-JL transform with high probability.
\par

The simplicity of linear maps makes them a favorite choice for dimension reduction.
A linear map
$f(\mathbf{x}) = \mathbf{Ax}$ for $\mathbf{A}\subseteq{\mathbb{R}^{d \times k}}$
is a good DRM if has the following properties:
\begin{enumerate}
\item \label{expected-metry} \emph{Expected Isometry.}
In expectation, the map $A$ is an isometry: $\mathbb{E}\|\mathbf{Ax}\|^2 = \|\mathbf{x}\|^2$.
\item \label{vanishing-variance} \emph{Vanishing Variance.}
$\textrm{Var}(\|\mathbf{Ax}\|^2)$ decays to zero as $k$ increases.
The variance measures the deviations from isometry, and serves
as a quality metric for the DRM.
\item \emph{Database-Friendly\label{db-friendly}.}
A map is database-friendly if it uses not-too-much storage (and so fits in memory),
can be applied to a vector with relatively few queries to vector elements
(and so uses few database lookups),
and is computationally cheap to construct and apply.

\end{enumerate}
Lemma \ref{lemma:inner-product} in Appendix \ref{sec:appendix_proof} shows
any linear map that is an expected isometry with vanishing variance
is a $\epsilon-$JL transform with high probability, for sufficiently large $k$.

% We briefly review related work on the Khatri-Rao product
% for low memory dimension reduction.
Sparse random maps for low memory dimension reduction
were first proposed by \cite{achlioptas2003database},
and further work has improved the memory requirements and guarantees of these methods
\cite{li2006very,bourgain2015toward}.
%showed that it is possible to use even sparser random matrices with a rigorous asymptotic analysis,
% \cite{bourgain2015toward} proposed a unified theory of sparse dimension reduction
% to achieve a user-defined accuracy level.
Most closely related to our work is Rudelson's foundational study \cite{rudelson2012row},
% of the Khatri-Rao product
% (which he calls row products of random matrices)
which considers how the spectral and geometric properties of
the random maps we use in this paper resemble a random map with iid entries,
and shows that their largest and smallest singular values are of the same order.
These results have been widely used to obtain guarantees for algorithmic privacy,
but not for random projection.
Battaglino et al. \cite{battaglino2018practical} use random projections
of Khatri-Rao products to develop a randomized least squares algorithm
for tensor factorization;
in contrast, our method uses the (full) Khatri-Rao product to enable random projection.
Sparse random projections to solve least squares problems were
also explored in \cite{wang2015fast} and \cite{woodruff2014sketching}.
To our knowledge, this paper is the first to consider using the Khatri-Rao product
for low memory random projection.

%\mnote{Most closely related paper found so far: MARK RUDELSON Row products of random matrices.
%Similar construction of random map.
%He show it essentially acts  the same as an elementwise random map.
%It uses a stronger probability model (independent entries).
%It doesn't consider replicates.
%These, together with our application and numerics, distinguish our approach.
%Do a forward literature search from this paper:
%does anyone do our application (to dimension reduction)?}

% Explain of the principles of distance preservation
% Definition of the data matrix, n \times D data points. All pairwise distances

% Applications of our methods

% Main objective of random projection: 1. save storage while preserving the distance of the data matrix in expectation. 2. More speedup

% Store test matrix itself important

% Main idea of our methods

% Main contribution:

% For Li Ping:

\subsection{Notation}
We denote \textit{scalar}, \textit{vector}, and \textit{matrix} variables, respectively,
by lowercase letters ($x$), boldface lowercase letters ($\mathbf{x}$),
and boldface capital letters  ($\mathbf{X}$).
Let $[N] = \{1, \dots, N\}$.
For a vector $\mathbf{x}$ of size n, we let $\|\mathbf{x}\|_q = (\sum_{i=1}^n x_i^q)^{1/q}$
be its $q$ norm for $q\ge 1$.
For a matrix $\mathbf{X}$, we denote its $i^{th}$ row, $j^{th}$ column,
and the $(i,j)^{th}$ element as $\mathbf{X}(i,.)$, $\mathbf{X}(.,j)$, and $\mathbf{X}(i,j)$.
%\mnote{We can define the KR product directly and omit this definition, which
%takes up room and is more general than needed.}
We let $\mathbf{A} \odot \mathbf{B}$ denote the \textit{Khatri-Rao product},
$\mathbf{A} \in \mathbb{R}^{I \times K}, \mathbf{B} \in \mathbb{R}^{J \times K}$,
i.e. the ``matching column-wise'' Kronecker product.
The resulting matrix of size $(IJ) \times K$ is given by:
%\mnote{The formula below has the wrong dimensions.
%Shouldn't we be taking Kronecker products of pairs of %\emph{columns}?}
\begin{equation}\label{khatri-rao}
\mathbf{A} \odot \mathbf{B} = \left[
\begin{array}{ccc}
\mathbf{A}(1,1)\mathbf{B}(\cdot,1)   & \cdots & \mathbf{A}(1,K)\mathbf{B}(\cdot,K) \\
\vdots & \ddots & \vdots \\
\mathbf{A}(I,1)\mathbf{B}(\cdot, 1) & \cdots &   \mathbf{A}(I,K)\mathbf{B}(\cdot,K)
\end{array}
\right].
\end{equation}

\section{Tensor Random Projection}
We seek a random projection map to embed a collection of vectors
$\mathcal X \subseteq \mathbb{R}^{d}$
into $\mathbb{R}^k$ with $k \ll d$.
Let us take $d = \prod_{n=1}^N d_n$, motivated by the problem of compressing
(the vectorization of) an order $N$ tensor with dimensions $d_1,\ldots,d_N$. Conventional random projections use $O(kd)$ random variables.
Generating so many random numbers is costly; and storing them can be costly when $d$ is large.
Is so much randomness truly necessary for a random projection map?

% Indeed, we show it is not.
To reduce randomness and storage requirements, we propose
the \emph{tensor random projection} (TRP):
\begin{equation}
\label{eq:TRP}
f_{\text{TRP}}(\mathbf{x}):= (\mathbf{A}_1 \odot \cdots \odot \mathbf{A}_N)^\top
\mathbf{x},
\end{equation}
where each $\mathbf{A}_i \in \mathbb{R}^{d_i \times k}$, for $i \in [N]$,
can be an arbitrary RP map and
$\mathbf{A} := (\mathbf{A}_1 \odot \cdots \odot \mathbf{A}_N)^\top$.
We call $N$ the \emph{order} of the TRP.
% This construction is inspired by the CANDECOMP/PARAFAC (CP) decomposition in
% low-rank tensor approximation \cite{kolda2009tensor}.
We show in this paper that the TRP is an expected isometry,
has vanishing variance,
and supports database-friendly operations.
%For more background knowledge about tensor decomposition, please refer to \cite{kolda2009tensor}.

The TRP requires only $k\sum_{i = 1}^N d_i$ random variables
(or $k\sqrt[N]{d}$ by choosing each $d_i$ to be equal),
rather than the $kd$ random variables needed by conventional methods.
Hence the TRP is database friendly:
it significantly reduces storage costs and randomness requirements compared to its
constituent DRMs.

In large scale database settings,
where computational efficiency is critical and queries of vector elements are costly,
practitioners often use sparse RPs.
Let $\delta$ be the proportion of non-zero elements in the RP map.
To achieve a $\delta$-sparse RP, a common construction is the scaled sign random map:
each element is distributed as $(-1/\sqrt{\delta}, 0, 1/\sqrt{\delta})$ with probability
$(\delta/2, 1-\delta, \delta/2)$.
\cite{achlioptas2003database} proposed $\delta=1/3$,
while \cite{li2006very} further suggests a sparser scheme with
$\delta=1/\sqrt{d}$ that he calls the \textit{Very Sparse} RP.

To further reduce memory requirements of random projection,
we can form a TRP whose constituent submatrices
are generated each with sparsity factor $\delta$,
which leads to a $\delta^N$-sparse TRP. Under sparse setting, it is a $(1/3)^N$ sparse TRP while under very sparse setting, it is a $1/\sqrt{d}$ sparse TRP. 
Both TRPs can be applied to a vector using very few queries to vector elements
and no multiplications.
Below, we show both sparse and very sparse TRP are low-variance approximate isometry empirically. 

\paragraph{Variance Reduction}
One quirk of many DRMs is that the variance of the map is controlled by the range $k$ of the map.
However, with the TRP we can reduce the variance without increasing $k$.
We propose the \rm{TRP(T)},
a scaled average of $T$ independent TRPs we call \emph{replicates},
defined as
\begin{equation}
f_{\text{TRP(T)}}(\mathbf{x}):= \frac{1}{\sqrt{T}}\sum_{t = 1}^T f^{(t)}_{\text{TRP} }(\mathbf{x}).
\end{equation}
(Note that the average of $T$ TRPs is not itself a TRP.)
We discuss theoretical properties of this map in the main theory section below.

\section{Main Theory}
In this section, we will show the TRP and TRP(T) are expected isometries with vanishing variance.
We provide a rate for the decrease in variance with $k$.
We also prove a non-asymptotic concentration bound on the quality of the isometry when $N=2$.
%but leave the task of building concentration bound for $N\ge 3$ case for the future exploration.
Without loss of generality, we state our results only for the TRP(T),
since the TRP follows as a special case with $T=1$. We begin by showing the TRP(T) is an approximate isometry.
\begin{lem}
\label{lemma: norm-preserve}
Fix $\mathbf{x} \in \mathbb{R}^{\prod_{n=1}^N d_n}$.
Form a \textup{TRP}(T) of order $N$
composed of $k$ independent matrices
whose columns are independent random vectors of mean zero in isotropic positions, 
i.e. with identity covariance matrix. Then,
\begin{equation}
\label{eq:lemma-invariant-length-statement}
%\expect \|f_{\textup{TRP}}(\mathbf{x})\|^2 = \|\mathbf{x}\|^2 \qquad \text{and} \qquad
\mathbb{E} \|f_{\textup{TRP(T)}}(\mathbf{x})\|^2 = \|\mathbf{x}\|^2. \nonumber
\end{equation}
\end{lem}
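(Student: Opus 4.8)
The plan is to reduce the claim to a single second-moment identity for one \textup{TRP} replicate, whose entire content is the mixed-product property of the Kronecker product combined with the isotropy hypothesis.

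\textbf{Step 1 (reduce \textup{TRP}(T) to \textup{TRP}).} First I would expand the squared norm of the average. Writing $f_{\textup{TRP(T)}}(\mathbf{x}) = \tfrac{1}{\sqrt T}\sum_{t=1}^T f^{(t)}_{\textup{TRP}}(\mathbf{x})$, one obtains
\begin{align*}
\mathbb{E}\|f_{\textup{TRP(T)}}(\mathbf{x})\|^2
&= \frac1T\sum_{t=1}^T \mathbb{E}\|f^{(t)}_{\textup{TRP}}(\mathbf{x})\|^2 \\
&\qquad + \frac1T\sum_{t\neq s}\mathbb{E}\bigl\langle f^{(t)}_{\textup{TRP}}(\mathbf{x}),\, f^{(s)}_{\textup{TRP}}(\mathbf{x})\bigr\rangle ,
\end{align*}
and since the replicates are independent the cross terms equal $\bigl\langle \mathbb{E} f^{(t)}_{\textup{TRP}}(\mathbf{x}),\, \mathbb{E} f^{(s)}_{\textup{TRP}}(\mathbf{x})\bigr\rangle$. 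Each entry of $\mathbb{E} f_{\textup{TRP}}(\mathbf{x})$ is $\bigl\langle \bigotimes_{i=1}^N \mathbb{E}[\mathbf{A}_i(\cdot,j)],\,\mathbf{x}\bigr\rangle = 0$, because the columns have mean zero and independence of the $N$ factors pulls the expectation through the Kronecker product; so every cross term vanishes. As the replicates are identically distributed, this reduces the lemma to the case $T=1$.

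\textbf{Step 2 (second moment of one \textup{TRP}).} Let $\mathbf{c}_j := \bigotimes_{i=1}^N \mathbf{A}_i(\cdot,j)$ be the $j$th column of $\mathbf{A}_1\odot\cdots\odot\mathbf{A}_N$, so that, up to the $1/\sqrt k$ scaling in the definition of the map, $\|f_{\textup{TRP}}(\mathbf{x})\|^2 = \mathbf{x}^\top\bigl(\sum_{j=1}^k \mathbf{c}_j\mathbf{c}_j^\top\bigr)\mathbf{x}$. The key step is to show $\mathbb{E}[\mathbf{c}_j\mathbf{c}_j^\top] = \mathbf{I}_d$. By the mixed-product identity $(\mathbf{u}_1\otimes\cdots\otimes\mathbf{u}_N)(\mathbf{v}_1\otimes\cdots\otimes\mathbf{v}_N)^\top = \bigotimes_{i=1}^N \mathbf{u}_i\mathbf{v}_i^\top$, we have $\mathbf{c}_j\mathbf{c}_j^\top = \bigotimes_{i=1}^N \mathbf{A}_i(\cdot,j)\mathbf{A}_i(\cdot,j)^\top$; then independence of the $N$ constituent columns gives $\mathbb{E}[\mathbf{c}_j\mathbf{c}_j^\top] = \bigotimes_{i=1}^N \mathbb{E}[\mathbf{A}_i(\cdot,j)\mathbf{A}_i(\cdot,j)^\top] = \bigotimes_{i=1}^N \mathbf{I}_{d_i} = \mathbf{I}_d$, where the middle equality is exactly the assumption that each column is mean zero with identity covariance. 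Summing over the $k$ columns and applying the normalization yields $\mathbb{E}\|f_{\textup{TRP}}(\mathbf{x})\|^2 = \|\mathbf{x}\|^2$.

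\textbf{Where the difficulty is.} There is essentially no obstacle here: this is a pure moment computation with no concentration, and the only thing requiring care is bookkeeping around independence. Independence of the replicates is what kills the cross terms in Step 1, and independence of the $N$ constituent matrices (not merely of their individual entries) is what licenses passing the expectation through the Kronecker product in Step 2; note that independence of the $k$ columns within a single $\mathbf{A}_i$ plays no role for this lemma and will matter only for the variance bound. If one prefers to avoid Kronecker matrix algebra, the identical conclusion follows by indexing $\mathbf{x}$ as an order-$N$ tensor and computing $\mathbb{E}\bigl[(f_{\textup{TRP}}(\mathbf{x}))_j^2\bigr]$ entrywise, where independence across the $N$ factors collapses the resulting fourth-order index sum to $\sum_{\boldsymbol\ell} x_{\boldsymbol\ell}^2$.
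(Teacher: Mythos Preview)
Your proof is correct; the route is genuinely different from the paper's. The paper argues entrywise and by induction: it first observes that any $k\times d$ map whose rows have mean-zero, unit-variance, pairwise uncorrelated entries is an expected isometry, and then shows inductively that the Khatri-Rao product of two matrices whose columns satisfy the isotropy hypothesis again has columns with that property (so the hypothesis propagates from $N=2$ to general $N$). You instead compute the second moment in one stroke via the Kronecker mixed-product identity, $\mathbb{E}[\mathbf{c}_j\mathbf{c}_j^\top]=\bigotimes_{i=1}^N \mathbb{E}[\mathbf{A}_i(\cdot,j)\mathbf{A}_i(\cdot,j)^\top]=\bigotimes_{i=1}^N \mathbf{I}_{d_i}=\mathbf{I}_d$, which uses the column-isotropy assumption exactly as stated and avoids induction altogether. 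Your reduction of \textup{TRP}(T) to \textup{TRP} is the same as the paper's in spirit, though you are slightly more explicit about why the cross terms vanish (you verify $\mathbb{E} f_{\textup{TRP}}(\mathbf{x})=0$ rather than just citing independence). Your remark that independence of the $k$ columns within each $\mathbf{A}_i$ is irrelevant here and only enters in the variance lemma is also on point.
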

Interestingly, \cref{lemma: norm-preserve} does not require elements of $\mathbf{A}_n$ to be i.i.d..
Now we present an explicit form for the variance of the isometry.
\begin{lem}
\label{lemma:variance}
Fix $\mathbf{x} \in \mathbb{R}^{\prod_{n=1}^N d_n}$.
Form a \textup{TRP}(T) of order $N$ with range $k$
independent matrices whose entries are i.i.d. with
mean zero, variance one, and fourth moment $\Delta$.
Then
\begin{equation*}
% \begin{aligned}
% &\textrm{Var}(\|f_{\textup{TRP}}(\mathbf{x})\|^2) = \frac{1}{k}\left[ (\Delta^N-3)\|\mathbf{x}\|_4^4 +2\|\mathbf{x}\|_2^4\right] ,\\
% &\textrm{Var}(\|f_{\textup{TRP(T)}}(\mathbf{x})\|^2) = \frac{1}{Tk}(\Delta^N-3)\|\mathbf{x}\|_4^4 + \frac{2}{k}\|\mathbf{x}\|_2^4.
% \end{aligned}
% \var(\|f_{\textup{TRP}}(\mathbf{x})\|^2) = \frac{1}{k}(\Delta^N-3)\|\mathbf{x}\|_4^4 + \frac{2}{k}\|\mathbf{x}\|_2^4,
% \qquad
\var(\|f_{\textup{TRP(T)}}(\mathbf{x})\|^2) = \frac{1}{Tk}(\Delta^N-3)\|\mathbf{x}\|_4^4 + \frac{2}{k}\|\mathbf{x}\|_2^4.
\end{equation*}
\end{lem}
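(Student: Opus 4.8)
The plan is to use the product structure of the Khatri--Rao map to reduce the variance to a single scalar fourth moment, and then to evaluate that moment by a pairing expansion over the $N$ modes.

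First I would isolate one output coordinate. Identify $\mathbf{x}$ with the order-$N$ tensor $\mathbf{X}\in\mathbb{R}^{d_1\times\cdots\times d_N}$ it vectorizes. Since the $j$-th column of $\mathbf{A}_1^{(t)}\odot\cdots\odot\mathbf{A}_N^{(t)}$ is the Kronecker product of the $j$-th columns of its factors, the $j$-th coordinate of the $t$-th replicate is
\begin{equation*}
Z_j^{(t)}\;=\;\sum_{i_1,\dots,i_N}\mathbf{X}(i_1,\dots,i_N)\prod_{n=1}^N a_{n,j}^{(t)}(i_n),
\end{equation*}
where $a_{n,j}^{(t)}(\cdot)$ are the entries of the $j$-th column of $\mathbf{A}_n^{(t)}$. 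Three facts drive everything: (i) for $j\neq j'$ the families $\{Z_j^{(t)}\}_{t}$ and $\{Z_{j'}^{(t)}\}_{t}$ use disjoint entries and so are independent; (ii) for fixed $j$ the variables $Z_j^{(t)}$, $t\in[T]$, are i.i.d.; (iii) each $Z_j^{(t)}$ is centered. Writing $W_j:=T^{-1/2}\sum_{t=1}^{T}Z_j^{(t)}$, and since the TRP(T) is normalized so that \cref{lemma: norm-preserve} holds, $\|f_{\textup{TRP(T)}}(\mathbf{x})\|^2=k^{-1}\sum_{j=1}^{k}W_j^2$ is a rescaled sum of $k$ i.i.d. terms, so
\begin{equation*}
\var\big(\|f_{\textup{TRP(T)}}(\mathbf{x})\|^2\big)\;=\;\frac1k\,\var\big(W_1^2\big)\;=\;\frac1k\Big(\mathbb{E}\big[W_1^4\big]-\big(\mathbb{E}\big[W_1^2\big]\big)^2\Big).
\end{equation*}

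Next I would clear the $T$-dependence and the low-order moments. For the second moment, $\mathbb{E}[W_1^2]=\|\mathbf{x}\|^2$: either apply \cref{lemma: norm-preserve} to a single coordinate, or expand and use $\mathbb{E}[a_{n,j}(p)a_{n,j}(q)]=\mathbbm{1}[p{=}q]$ in every mode, collapsing the double sum to $\sum_{i_1,\dots,i_N}\mathbf{X}(i_1,\dots,i_N)^2$. For the fourth moment, expand $W_1^4=T^{-2}\big(\sum_{t}Z_1^{(t)}\big)^4$; independence and centering in $t$ kill every term except those with all four indices equal or those splitting into two pairs, giving $\mathbb{E}[W_1^4]=\tfrac1T\mathbb{E}\big[(Z_1^{(1)})^4\big]+\tfrac{3(T-1)}{T}\|\mathbf{x}\|^4$, hence
\begin{equation*}
\var\big(W_1^2\big)\;=\;\frac1T\big(M_4-3\|\mathbf{x}\|^4\big)+2\|\mathbf{x}\|^4,\qquad M_4:=\mathbb{E}\big[(Z_1^{(1)})^4\big].
\end{equation*}
So the whole lemma reduces to the identity $M_4=(\Delta^N-3)\|\mathbf{x}\|_4^4+3\|\mathbf{x}\|^4$.

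The crux is this fourth-moment identity. Expanding the fourth power and using independence across the $N$ modes,
\begin{equation*}
M_4\;=\;\sum_{\mathbf{i}^{(1)},\dots,\mathbf{i}^{(4)}}\Big(\prod_{r=1}^{4}\mathbf{X}(\mathbf{i}^{(r)})\Big)\prod_{n=1}^{N}\mathbb{E}\Big[\prod_{r=1}^{4}a_n\big(i_n^{(r)}\big)\Big].
\end{equation*}
Into each mode I would substitute the scalar moment identity, valid for i.i.d. mean-zero, variance-one, fourth-moment-$\Delta$ entries,
\begin{equation*}
\mathbb{E}\big[a(p)a(q)a(r)a(s)\big]=\mathbbm{1}[p{=}q]\mathbbm{1}[r{=}s]+\mathbbm{1}[p{=}r]\mathbbm{1}[q{=}s]+\mathbbm{1}[p{=}s]\mathbbm{1}[q{=}r]+(\Delta-3)\,\mathbbm{1}[p{=}q{=}r{=}s],
\end{equation*}
multiply the $N$ copies out, and group the $4^N$ resulting terms according to which modes select the last (``diagonal'') summand. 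The terms in which every mode picks the \emph{same} one of the three pairings contribute $3\|\mathbf{x}\|^4$; the term in which every mode picks the diagonal contributes $(\Delta-3)^N\|\mathbf{x}\|_4^4$; the real work is to show that the remaining ``mixed'' terms (diagonal on some modes and a pairing on others, or different pairings on different modes) reassemble so that the total contribution supported on $\mathbf{i}^{(1)}=\cdots=\mathbf{i}^{(4)}$ equals exactly $(\Delta^N-3)\|\mathbf{x}\|_4^4$ while the genuinely off-diagonal pieces cancel. Granting this, substituting $M_4$ back gives
\begin{equation*}
\var\big(\|f_{\textup{TRP(T)}}(\mathbf{x})\|^2\big)=\frac1k\Big(\frac{(\Delta^N-3)\|\mathbf{x}\|_4^4}{T}+2\|\mathbf{x}\|^4\Big)=\frac{1}{Tk}(\Delta^N-3)\|\mathbf{x}\|_4^4+\frac{2}{k}\|\mathbf{x}\|_2^4 .
\end{equation*}
I expect this mixed-term collapse to be the main obstacle. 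Because the Khatri--Rao entries are correlated across modes, a mixed pattern a priori contributes a mode-coupled contraction of the tensor $\mathbf{X}$ rather than $\|\mathbf{x}\|_4^4$ or $\|\mathbf{x}\|_2^4$, so the heart of the argument is to show that all such contractions telescope into the stated form, while scrupulously separating the ``two disjoint pairs'' pattern from the ``all four equal'' pattern so as not to double count. The reduction to a single coordinate and the $T$-scaling, by contrast, are routine bookkeeping.
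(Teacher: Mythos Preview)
Your reduction to the single-coordinate fourth moment $M_4$ and your treatment of the $T$-averaging are correct and parallel the paper. The gap is precisely where you flag it: the ``mixed-term collapse'' does not occur, because for $N\ge 2$ the target identity $M_4=(\Delta^N-3)\|\mathbf{x}\|_4^4+3\|\mathbf{x}\|_2^4$ is simply false. Take $N=2$ with Gaussian entries ($\Delta=3$) so that every diagonal term drops, and write $\mathbf{X}=M\in\mathbb{R}^{d_1\times d_2}$. Your mode-by-mode pairing expansion then gives $M_4=3\|\mathbf{x}\|_2^4+6\|M^\top M\|_F^2$: the six ``different pairing in the two modes'' contributions are each $\|M^\top M\|_F^2$, and they do not cancel. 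Concretely, for $\mathbf{x}=(1,1,0,0)^\top/\sqrt{2}$ (so $d_1=d_2=2$) one has $Z_1^{(1)}=a_1(b_1+b_2)/\sqrt{2}$ and hence $M_4=\tfrac14\,\mathbb{E}[a_1^4]\,\mathbb{E}[(b_1+b_2)^4]=\tfrac14\cdot 3\cdot 12=9$, while the formula you are aiming for predicts $(\Delta^2-3)\|\mathbf{x}\|_4^4+3\|\mathbf{x}\|_2^4=6\cdot\tfrac12+3=6$. So the step ``the genuinely off-diagonal pieces cancel'' cannot be completed.

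The paper's own argument does not evade this; it hits the same obstruction from a different side. It expands $y_1^4$ multinomially over the full multi-index set and then asserts both that the four-pairwise-distinct term has zero expectation and that $\mathbb{E}[\mathbf{A}^2(1,\mathbf{r}_1)\mathbf{A}^2(1,\mathbf{r}_2)]=1$ for $\mathbf{r}_1\neq\mathbf{r}_2$. Each assertion tacitly treats the entries of a Khatri--Rao row as mutually independent, whereas the proof of Lemma~\ref{lemma: norm-preserve} only establishes pairwise uncorrelatedness; for $N\ge 2$ both assertions fail (e.g.\ with $\mathbf{r}_1=(1,1)$, $\mathbf{r}_2=(1,2)$ one gets $\mathbb{E}[\mathbf{A}^2(1,\mathbf{r}_1)\mathbf{A}^2(1,\mathbf{r}_2)]=\Delta$, and the four distinct multi-indices $(1,1),(1,2),(2,1),(2,2)$ give a nonvanishing product $a_1^2a_2^2b_1^2b_2^2$). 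In short, the stated variance formula is correct for $N=1$ but not for the Khatri--Rao product with $N\ge 2$: the true variance depends on $\mathbf{x}$ through tensor contractions such as $\|M^\top M\|_F^2$ that cannot be expressed using $\|\mathbf{x}\|_2$ and $\|\mathbf{x}\|_4$ alone.
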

We can see the variance increases with $N$.
In the $N=1$ Gaussian case, this formula shows a variance of $2/k \|\mathbf{x}\|_2^4$,
which agrees with the classic result.
Notice the TRP(T) only reduces the first term in the variance bound:
as $T\rightarrow \infty$, the variance converges to that of a Gaussian random map. Finally we show a non-asymptotic concentration bound for $N=2$.
We leave the parallel result for $N\ge 3$ open for future exploration.
\begin{thm}
	\label{prop: N-2-bound}
  Fix $\mathbf{x} \in \mathbb{R}^{d_1 d_2}$ with sub-Gaussian norm $\varphi_2$.
  Form a \textup{TRP}(T) of order 2 with range $k$
  composed of two independent matrices whose entries are drawn i.i.d.
  from a sub-Gaussian distribution with mean zero and variance one.
  Then there exists a constant $C$ depending on $\varphi_2$
  and a universal constant $c_1$ % XXX universal? or what?
  so that
	\begin{equation}
	\mathbb{P}\left(\left| \|f_{\textup{TRP}}(\mathbf{x})\|^2 - \|\mathbf{x}\|^2_2 \right| \ge \epsilon \|x\|^2\right)\le C\exp\left[ - c_1 \left(\sqrt{k}\epsilon\right)^{1/2} \right],\nonumber
	\end{equation}

\end{thm}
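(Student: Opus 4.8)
The natural approach is to recast $\|f_{\mathrm{TRP}}(\mathbf{x})\|^2$ as a normalized sum of $k$ i.i.d.\ bilinear chaoses and then concentrate in two nested layers (we argue for the single replicate $T=1$, as in the stated bound). Reshape $\mathbf{x}$ into a matrix $X\in\mathbb{R}^{d_1\times d_2}$ with $\mathrm{vec}(X)=\mathbf{x}$, and write $\mathbf{a}_j:=\mathbf{A}_1(\cdot,j)$ and $\mathbf{b}_j:=\mathbf{A}_2(\cdot,j)$ for the $j$-th columns. The $j$-th coordinate of $f_{\mathrm{TRP}}(\mathbf{x})$ is the bilinear form $\mathbf{a}_j^{\top}X\mathbf{b}_j$ (the matrix-vector version of $(\mathbf{a}_j\otimes\mathbf{b}_j)^{\top}\mathbf{x}$), so, with the $1/\sqrt{k}$ scaling that makes the map an expected isometry (\cref{lemma: norm-preserve}),
\[
\|f_{\mathrm{TRP}}(\mathbf{x})\|^2 \;=\; \frac1k\sum_{j=1}^{k}\big(\mathbf{a}_j^{\top}X\mathbf{b}_j\big)^2 .
\]
By homogeneity take $\|\mathbf{x}\|_2=\|X\|_F=1$; the claim becomes a Bernstein-type deviation bound for this normalized sum around its mean $1$. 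Each summand is a degree-$4$ polynomial of sub-Gaussian variables --- in the rank-one case $X=\mathbf{u}\mathbf{v}^{\top}$ it is literally $(\mathbf{u}^{\top}\mathbf{a}_j)^2(\mathbf{v}^{\top}\mathbf{b}_j)^2$ --- hence genuinely sub-Weibull of exponent $1/2$, which is the source of the stretched exponent in the conclusion.

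The first main step is to condition on $\mathbf{A}_2$. Setting $\mathbf{v}_j:=X\mathbf{b}_j$, the variables $\mathbf{a}_j^{\top}\mathbf{v}_j$ are, given $\mathbf{A}_2$, independent and sub-Gaussian with parameter a constant multiple of $\varphi_2\|\mathbf{v}_j\|_2$, so their squares are conditionally independent sub-exponential variables with conditional mean $\|\mathbf{v}_j\|_2^2$ and $\psi_1$-norm of order $\varphi_2^2\|\mathbf{v}_j\|_2^2$. Bernstein's inequality for sums of independent sub-exponentials then bounds the conditional deviation of $\frac1k\sum_j(\mathbf{a}_j^{\top}\mathbf{v}_j)^2$ from $\frac1k\sum_j\|\mathbf{v}_j\|_2^2$ in terms of the random quantities $\frac1{k^2}\sum_j\|\mathbf{v}_j\|_2^4$ and $\frac1k\max_{j\le k}\|\mathbf{v}_j\|_2^2$.

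The second main step handles the $\mathbf{A}_2$-randomness. First, $\frac1k\sum_j\|\mathbf{v}_j\|_2^2=\frac1k\sum_j\mathbf{b}_j^{\top}X^{\top}X\mathbf{b}_j$ is a normalized sum of $k$ i.i.d.\ quadratic forms in sub-Gaussian vectors, so the Hanson--Wright inequality gives it a sub-exponential deviation around $\|X\|_F^2=1$ at scale of order $\varphi_2^2\|X^{\top}X\|_F/\sqrt{k}$. Second, each $\|\mathbf{v}_j\|_2^2=\mathbf{b}_j^{\top}X^{\top}X\mathbf{b}_j$ is itself such a quadratic form, so a single Hanson--Wright tail plus a union bound over $j\in[k]$ controls $\max_{j\le k}\|\mathbf{v}_j\|_2^2$ (on a high-probability event it is at most $1+O(\log k)$ up to constants), and integrating the sub-exponential tail controls $\sum_j\|\mathbf{v}_j\|_2^4$. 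Substituting these estimates into the conditional Bernstein bound and taking expectations over $\mathbf{A}_2$ yields the unconditional bound; after optimizing the split of the target deviation $\epsilon$ between the two sub-exponential layers one obtains a tail of the form $C\exp(-c_1(\sqrt{k}\,\epsilon)^{1/2})$, with $C$ collecting the dependence on $\varphi_2$ through the sub-Gaussian, Bernstein, and Hanson--Wright constants, and $c_1$ universal.

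The main obstacle is precisely this multiplicative coupling of $\mathbf{a}_j$ and $\mathbf{b}_j$: no single application of Hanson--Wright or Bernstein suffices, and the conditional Bernstein bound involves the \emph{random} scale factors $\|\mathbf{v}_j\|_2^2$, which can be as large as $\Theta(d_2)$ when $X$ is nearly rank-one, so one must first control their maximum over the $k$ replicates --- which is where the logarithm, and more importantly the stretched exponent $1/2$ rather than a cleaner minimum of several terms, enters --- before the two layers can be combined. An alternative route, bypassing the explicit conditioning, is to invoke a polynomial-chaos (hypercontractivity) tail bound to certify that each summand is sub-Weibull$(1/2)$ with norm of order $\varphi_2^4\|X\|_F^2$, and then apply a Bernstein inequality for sums of i.i.d.\ sub-Weibull variables; this yields the same shape of bound.
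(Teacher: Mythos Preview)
Your proposal is correct, but your primary route (condition on $\mathbf{A}_2$, Bernstein in the first layer, Hanson--Wright plus union bounds in the second) is genuinely different from the paper's argument. The paper instead implements what you sketch as your ``alternative route,'' and does so without any chaos or hypercontractivity machinery: it observes that the bilinear form $\mathbf{a}_j^{\top}X\mathbf{b}_j$ is exactly a \emph{quadratic} form $\mathbf{z}_j^{\top}M\mathbf{z}_j$ in the stacked vector $\mathbf{z}_j=[\mathbf{a}_j;\mathbf{b}_j]$, where $M=\begin{bmatrix}0 & X\\ 0 & 0\end{bmatrix}$ has $\|M\|\le\|M\|_F=\|X\|_F=1$ and zero trace (so $\mathbb{E}\,y_j=0$). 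A single application of Hanson--Wright to this stacked quadratic form gives sub-exponential tails on each $y_j$, hence the moment growth $\mathbb{E}|y_j|^{p}\le (Cp)^{p}$ with $C$ depending only on $\varphi_2$. The paper then invokes one black-box concentration lemma for i.i.d.\ sums whose entries satisfy this moment condition (a specialization of Lemma~B.2 in Erd\H{o}s et al., 2012) to obtain the stretched-exponential bound directly.

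The paper's route is shorter and avoids all the bookkeeping you anticipate: no conditioning, no control of the random scales $\|\mathbf{v}_j\|_2^2$, no union bound over $j$, and no splitting of $\epsilon$ between layers. Your conditioning approach, on the other hand, is more transparent about where the exponent $1/2$ comes from (two nested sub-exponential layers) and would adapt more readily to settings where the stacked-vector trick is unavailable---for instance, if the two factors had different tail behavior---at the cost of carrying the random Bernstein parameters through the outer expectation.
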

Here $\varphi_2$ is the sub-Gaussian norm defined in \cref{def:sub-gaussian} in Appendix \ref{sec:appendix_proof}.
\cref{prop: N-2-bound} shows that for a TRP to form an $\epsilon$-JL DRM
with substantial probability on a dataset with $n$ points,
our method requires $k=\mathcal{O}(\epsilon^{-2}\log^4 n)$ while
conventional random projections require $k=\mathcal{O}(\epsilon^{-2}\log n)$.
Numerical experiments suggest this bound is pessimistic.

\section{Experiment} \label{sec:simulation}
In this section, we compare the quality of the isometry of
conventional RPs, TRP, and TRP(5), for
Gaussian, Sparse \cite{achlioptas2003database},
and Very Sparse random maps \cite{li2006very} on both synthetic data and MNIST data.
We also use TRP and TRP(5) to compute pairwise cosine similarity
(Table \ref{tbl:mnist_inner_prod} and Appendix \ref{appendix:more_result})
and to sketch matrices and tensors
(Appendix \ref{appendix:sketching}),
although the theory still remains open.
% irrelevant, since we don't show timing
% We implemented the algorithm in Python and ran all experiments on a server with
% 128 Intel\textsuperscript{\textregistered} Xeon\textsuperscript{\textregistered} E7-4850
% v4 2.10GHz CPU cores and 1056GB memory.

Our first experiment evaluates the quality of the isometry for maps $\mathbb{R}^d \to \mathbb{R}^k$.
We generate $n=10$ independent vectors $\mathbf{x}_1, \dots, \mathbf{x}_n$ of sizes
$d = 2500, 10000, 40000$ from $\mathcal{N}(\mathbf{0},\mathbf{I})$.
We consider the following three RPs:
1. Gaussian RP; 2. Sparse RP \cite{achlioptas2003database}; 3. Very Sparse RP \cite{li2006very}.
For each, we compare the performance of RP, TRP, and TRP(5) with order 2 and $d_1=d_2$.
We evaluate the methods by repeatedly generating a RP and computing the reduced vector,
and plot the ratio of the pairwise distance
$\frac{1}{n(n-1)}\sum_{n \geq i\neq j \geq 1}\frac{\|\mathbf{A}\mathbf{x}_i - \mathbf{A}\mathbf{x}_j\|_2}{\|\mathbf{x}_i - \mathbf{x}_j\|_2}$
and the average standard deviation for different $k$
averaged over 100 replications.
In the MNIST example, we choose the first $n = 50$ vectors of size $d = 784$, normalize them,
and perform the same experiment.
Figure \ref{fig:main} shows results on simulated ($d = 2500$) and MNIST data
for the Gaussian and Very Sparse RP.
See \cref{appendix:more_result} for additional experiments.

These experiments show that to preserve pairwise distance and cosine similarity,
TRP performs nearly as well as RP for all three types of maps.
With only five replicates, TRP(5) reduces the variance significantly in real data while not much in the simulation setting.
The difference in accuracy between methods diminishes as $k$ increases.
When $d = d_1d_2 = 40000$, the storage for TRP(5) is still $\frac{1}{20}$ of the Gaussian RP.
The variance reduction is effective especially in sparse and very sparse setting.
% Furthermore, despite current lack of rigorous guarantee, the structured random map with variance reduction works very effective for sketching even when $d = 1000000$.
%For further investigation of the effect of our structured random map on matrix sketching, please refer to Appendix \ref{appendix:sketching}.
\begin{figure}[H]
	\centering
	\begin{subfigure}{0.24\textwidth}
		\includegraphics[scale = 0.22]{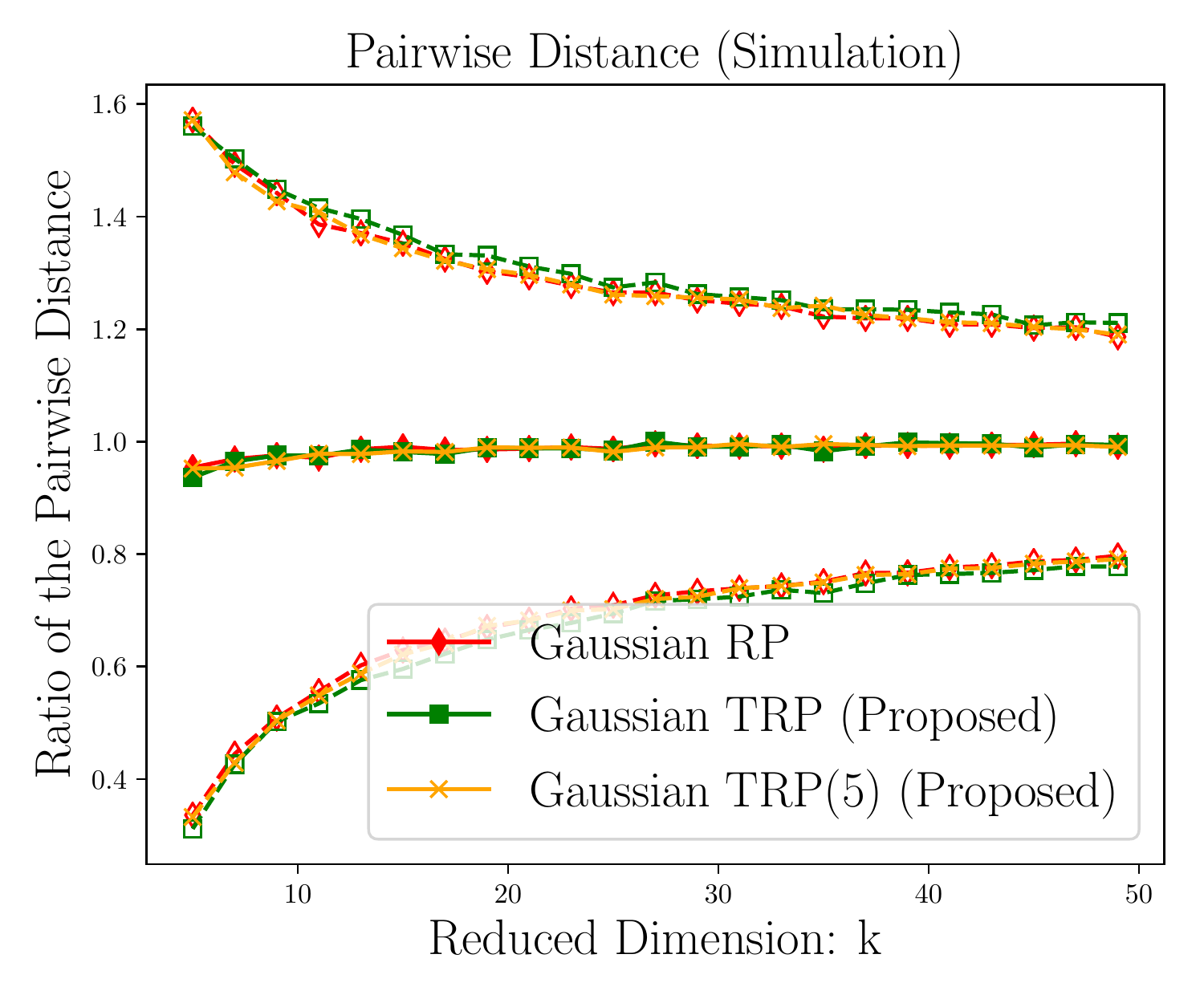}
	\end{subfigure}
	\begin{subfigure}{0.24\textwidth}
		\includegraphics[scale = 0.22]{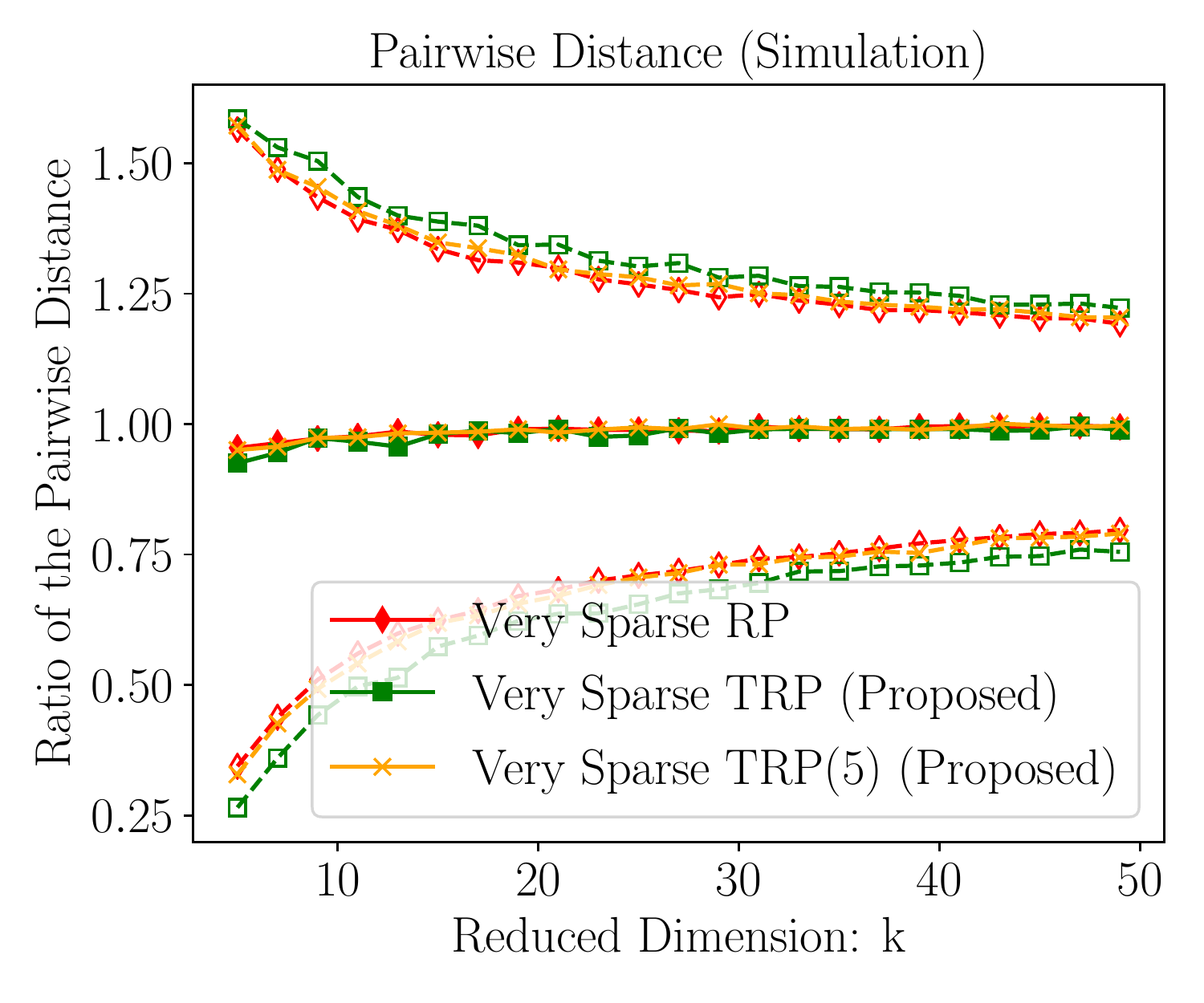}
	\end{subfigure}
	\begin{subfigure}{0.24\textwidth}
		\includegraphics[scale = 0.22]{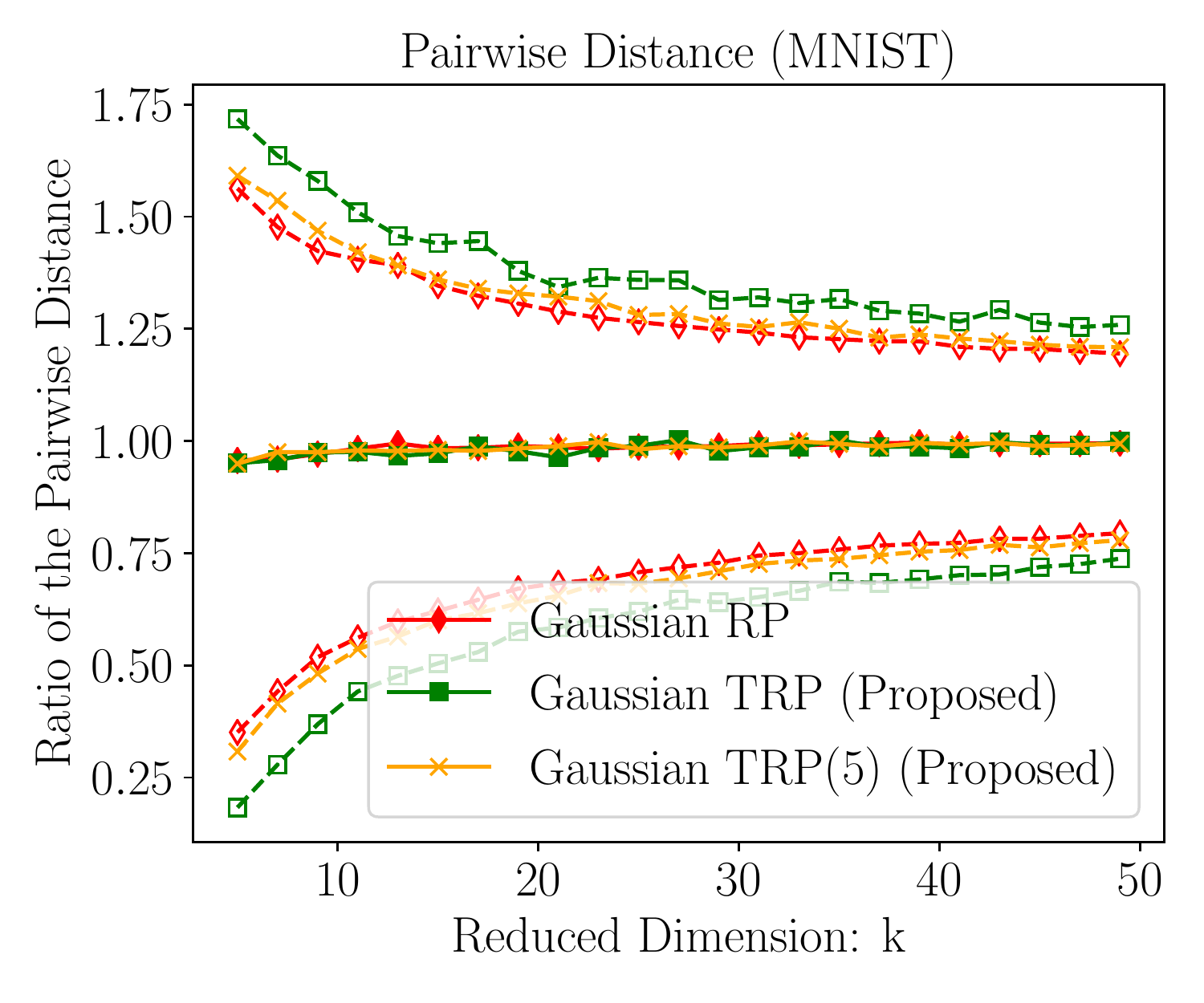}
	\end{subfigure}
	\begin{subfigure}{0.24\textwidth}
		\includegraphics[scale = 0.22]{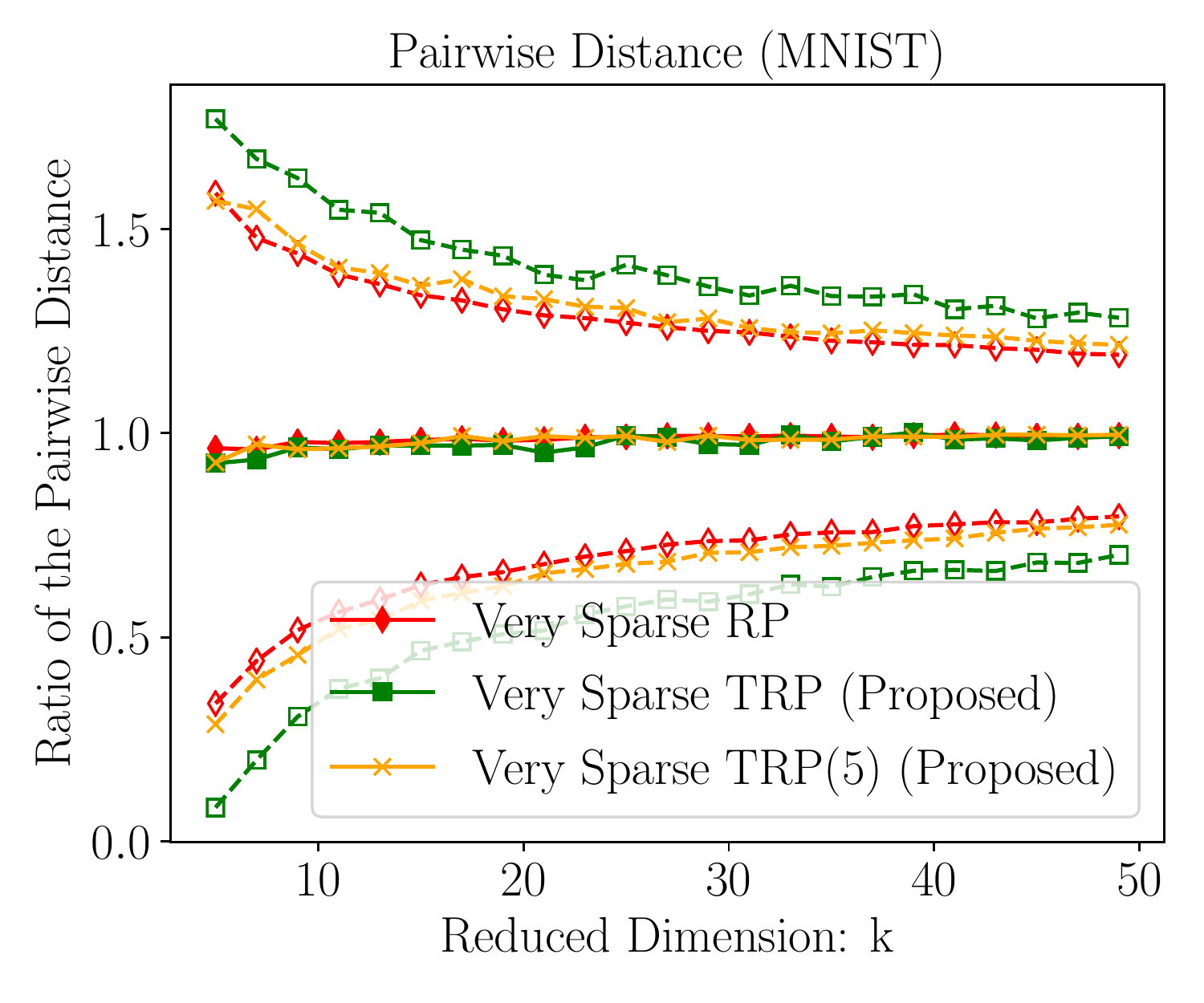}
	\end{subfigure}\\
	\caption{Isometry quality for simulated and MNIST data.
	The left two plots show results for Gaussian and Very Sparse RP, TRP, TRP(5)
	respectively applied to $n = 20$ standard normal data vectors in $\mathbb{R}^{2500}$.
	The right two plots show the same for 50 MNIST image vectors in $\mathbb{R}^{784}$.
	The dashed line shows the error two standard deviations from the average ratio.}
	\label{fig:main}
\end{figure}

\begin{table}[H]
\centering
\begin{tabular}{l|l|l|l}
       & Gaussian        & Sparse          & Very Sparse     \\ \hline
RP & 0.1198 (0.0147) & 0.1198 (0.0150) & 0.1189 (0.0108) \\ \hline
TRP    & 0.1540 (0.0290) & 0.1609 (0.0335) & 0.1662 (0.0307) \\ \hline
TRP(5) & 0.1262 (0.0166) & 0.1264 (0.0194) & 0.1276 (0.0164)
\end{tabular}
\caption{RMSE for the estimate of the pairwise inner product of the MNIST data, where standard error is in the parentheses.} \label{tbl:mnist_inner_prod}
\end{table}
\section{Conclusion}
The TRP is a novel dimension reduction map composed of smaller DRMs.
Compared to its constituent DRMs, it significantly reduces
the requirements for randomness and for storage.
% and computation cost (number of access to database), % XXX not sufficiently justified
Numerically, the variance-reduced TRP(5) method with only five replicates
achieves accuracy comparable to the conventional RPs for $1/20$ of the original storage.
We prove the TRP and TRP(T) are expected isometries with vanishing variance,
and provide a non-asymptotic error bound for the order 2 TRP.

% Can delete any of this
%to derive the central limit theorem
%for estimator pair-wise distance, and 
For the future work, we will provide a general non-asymptotic bound
for the higher order TRP 
%and to explore other methods for variance reduction using ideas from robust statistics.
and develop the theory relevant for the application of the TRP
in sketching low-rank approximation, given its practical effectiveness
(shown in Appendix \ref{appendix:sketching}).

%\mnote{ Future work:
%	We don't need an isometry. Instead, what we want is that points that
%	are close remain close. Behavior for points that were far apart doesn't matter.
%	See Charikar, Locality Sensitive Hashing,
%	or recent work by Anshumali Srivastava.
%	Notice columns are independent. Hence every measurement gives a new piece of info.
%	So we should be able to prove relatively easily that it doesn't sent non-zero vectors
%	to zero.
%	See Tropp Simplicial faces of the elliptope.
%	Tropp conjectures: you only need O(k) samples to preserve a k dimensional subspace.}

\clearpage
\bibliography{biblio}

\newcommand{\etalchar}[1]{$^{#1}$}
\begin{thebibliography}{DLDMV00}

\bibitem[Ach03]{achlioptas2003database}
Dimitris Achlioptas.
\newblock Database-friendly random projections: Johnson-lindenstrauss with
  binary coins.
\newblock {\em Journal of computer and System Sciences}, 66(4):671--687, 2003.

\bibitem[AZGMS14]{allen2014sparse}
Zeyuan Allen-Zhu, Rati Gelashvili, Silvio Micali, and Nir Shavit.
\newblock Sparse sign-consistent johnson--lindenstrauss matrices: Compression
  with neuroscience-based constraints.
\newblock {\em Proceedings of the National Academy of Sciences},
  111(47):16872--16876, 2014.

\bibitem[BBK18]{battaglino2018practical}
Casey Battaglino, Grey Ballard, and Tamara~G Kolda.
\newblock A practical randomized cp tensor decomposition.
\newblock {\em SIAM Journal on Matrix Analysis and Applications},
  39(2):876--901, 2018.

\bibitem[BCR09]{barron2009automatic}
Alberto Barr{\'o}n-Cede{\~n}o and Paolo Rosso.
\newblock On automatic plagiarism detection based on n-grams comparison.
\newblock In {\em European Conference on Information Retrieval}, pages
  696--700. Springer, 2009.

\bibitem[BDN15]{bourgain2015toward}
Jean Bourgain, Sjoerd Dirksen, and Jelani Nelson.
\newblock Toward a unified theory of sparse dimensionality reduction in
  euclidean space.
\newblock {\em Geometric and Functional Analysis}, 25(4):1009--1088, 2015.

\bibitem[BM01]{bingham2001random}
Ella Bingham and Heikki Mannila.
\newblock Random projection in dimensionality reduction: applications to image
  and text data.
\newblock In {\em Proceedings of the seventh ACM SIGKDD international
  conference on Knowledge discovery and data mining}, pages 245--250. ACM,
  2001.

\bibitem[BT02]{buhler2002finding}
Jeremy Buhler and Martin Tompa.
\newblock Finding motifs using random projections.
\newblock {\em Journal of computational biology}, 9(2):225--242, 2002.

\bibitem[DLDMV00]{de2000multilinear}
Lieven De~Lathauwer, Bart De~Moor, and Joos Vandewalle.
\newblock A multilinear singular value decomposition.
\newblock {\em SIAM journal on Matrix Analysis and Applications},
  21(4):1253--1278, 2000.

\bibitem[EYY12]{erdHos2012bulk}
L{\'a}szl{\'o} Erd{\H{o}}s, Horng-Tzer Yau, and Jun Yin.
\newblock Bulk universality for generalized wigner matrices.
\newblock {\em Probability Theory and Related Fields}, 154(1-2):341--407, 2012.

\bibitem[FM03]{fradkin2003experiments}
Dmitriy Fradkin and David Madigan.
\newblock Experiments with random projections for machine learning.
\newblock In {\em Proceedings of the ninth ACM SIGKDD international conference
  on Knowledge discovery and data mining}, pages 517--522. ACM, 2003.

\bibitem[HMT11]{halko2011finding}
Nathan Halko, Per-Gunnar Martinsson, and Joel~A Tropp.
\newblock Finding structure with randomness: Probabilistic algorithms for
  constructing approximate matrix decompositions.
\newblock {\em SIAM review}, 53(2):217--288, 2011.

\bibitem[JL84]{johnson1984extensions}
William~B Johnson and Joram Lindenstrauss.
\newblock Extensions of lipschitz mappings into a hilbert space.
\newblock {\em Contemporary mathematics}, 26(189-206):1, 1984.

\bibitem[Kas98]{kaski1998dimensionality}
Samuel Kaski.
\newblock Dimensionality reduction by random mapping: Fast similarity
  computation for clustering.
\newblock In {\em Neural networks proceedings, 1998. ieee world congress on
  computational intelligence. the 1998 ieee international joint conference on},
  volume~1, pages 413--418. IEEE, 1998.

\bibitem[KB09]{kolda2009tensor}
Tamara~G Kolda and Brett~W Bader.
\newblock Tensor decompositions and applications.
\newblock {\em SIAM review}, 51(3):455--500, 2009.

\bibitem[LHC06]{li2006very}
Ping Li, Trevor~J Hastie, and Kenneth~W Church.
\newblock Very sparse random projections.
\newblock In {\em Proceedings of the 12th ACM SIGKDD international conference
  on Knowledge discovery and data mining}, pages 287--296. ACM, 2006.

\bibitem[LKR06]{liu2006random}
Kun Liu, Hillol Kargupta, and Jessica Ryan.
\newblock Random projection-based multiplicative data perturbation for privacy
  preserving distributed data mining.
\newblock {\em IEEE Transactions on knowledge and Data Engineering},
  18(1):92--106, 2006.

\bibitem[MN98]{matsumoto1998mersenne}
Makoto Matsumoto and Takuji Nishimura.
\newblock Mersenne twister: a 623-dimensionally equidistributed uniform
  pseudo-random number generator.
\newblock {\em ACM Transactions on Modeling and Computer Simulation (TOMACS)},
  8(1):3--30, 1998.

\bibitem[PRTV00]{papadimitriou2000latent}
Christos~H Papadimitriou, Prabhakar Raghavan, Hisao Tamaki, and Santosh
  Vempala.
\newblock Latent semantic indexing: A probabilistic analysis.
\newblock {\em Journal of Computer and System Sciences}, 61(2):217--235, 2000.

\bibitem[Rud12]{rudelson2012row}
Mark Rudelson.
\newblock Row products of random matrices.
\newblock {\em Advances in Mathematics}, 231(6):3199--3231, 2012.

\bibitem[RV{\etalchar{+}}13]{rudelson2013hanson}
Mark Rudelson, Roman Vershynin, et~al.
\newblock Hanson-wright inequality and sub-gaussian concentration.
\newblock {\em Electronic Communications in Probability}, 18, 2013.

\bibitem[SGE{\etalchar{+}}05]{sahin2005prism}
Ozgur~D Sahin, Aziz Gulbeden, Fatih Emek{\c{c}}i, Divyakant Agrawal, and Amr
  El~Abbadi.
\newblock Prism: indexing multi-dimensional data in p2p networks using
  reference vectors.
\newblock In {\em Proceedings of the 13th annual ACM international conference
  on Multimedia}, pages 946--955. ACM, 2005.

\bibitem[W{\etalchar{+}}14]{woodruff2014sketching}
David~P Woodruff et~al.
\newblock Sketching as a tool for numerical linear algebra.
\newblock {\em Foundations and Trends{\textregistered} in Theoretical Computer
  Science}, 10(1--2):1--157, 2014.

\bibitem[WTSA15]{wang2015fast}
Yining Wang, Hsiao-Yu Tung, Alexander~J Smola, and Anima Anandkumar.
\newblock Fast and guaranteed tensor decomposition via sketching.
\newblock In {\em Advances in Neural Information Processing Systems}, pages
  991--999, 2015.

\bibitem[WYG{\etalchar{+}}09]{wright2009robust}
John Wright, Allen~Y Yang, Arvind Ganesh, S~Shankar Sastry, and Yi~Ma.
\newblock Robust face recognition via sparse representation.
\newblock {\em IEEE transactions on pattern analysis and machine intelligence},
  31(2):210--227, 2009.

\end{thebibliography}
\newpage
\begin{appendices}
\section{Proof for Main Theorems}
\label{sec:appendix_proof}

Before presenting the proof of the main theory, we introduce some additional notation.

\subsection*{Notations for Technical Proofs}
For a vector $\mathbf{x}$ with length $\prod_{n=1}^N d_n$,
we introduce the multi-index $\mathbf{x}_{r_1, \cdots, r_N}$, for any $r_n \in [d_n]$,
to represent the $(1 + \sum_{n = 1}^N(r_n -1)s_n)^{th}$ element,
where $s_n = \prod_{n+1}^Nd_n$ for $n < N$ and $s_n = 1$ for $n = N$.
For a vector $\mathbf{r}_1$, $\mathbf{r}_2$, we say $\mathbf{r}_1 = \mathbf{r}_2$
if and only if all their elements are the same.

Also, we let $\mathop{\mathbf{vec}}(\mathbf{A})$ be the vectorization operator for any matrix $\mathbf{A}\in \mathbb{R}^{d\times k}$, which stacks all columns of matrix $\mathbf{A}$ and returns a vector of length $kd$,
$[\mathbf{A}(\cdot, 1); \cdots; \mathbf{A}(\cdot, k); ]$. Here we use the semi-colon to denote
stacking vectors $\mathbf{x}$ and $\mathbf{y}$ on top of each other (vertically) to form a longer vector as $[\mathbf{x}; \mathbf{y}]$.
As comparison, we use the comma to denote row-wise concatenation, stacking vectors horizontally to form a matrix with two columns as $[\mathbf{x}^\top, \mathbf{y}^\top]$.

\subsection*{Proof for Lemma \ref{lemma: norm-preserve}}
\begin{proof}

We first give sufficient conditions on a random matrix that guarantees the conclusion of Lemma \ref{lemma: norm-preserve} holds
when the map is simply multiplication by the random matrix.
Then we show that the Khatri-Rao map with the condition stated in Lemma \ref{lemma: norm-preserve}
satisfies these sufficient conditions.

Consider a random matrix $\mathbf{A}\in \mathbb{R}^{k \times d}$ and $\mathbf{x} \in \mathbb{R}^d$
whose entries have unit variance and for which entries in the same row are uncorrelated:
$\mathbb{E}{\mathbf{A}^2(r,s)}=1$ for all $r$ and $s$ and $\mathbb{E}{\mathbf{A}(r,s_1)\mathbf{A}(r,s_2)} = 0$
for all $r \in[k]$, $s_1\neq s_2 \in[d]$.
Then $\mathbb{E}\| \frac{1}{\sqrt{k}}\mathbf{y}\|_2^2 = \|\mathbf{x}\|_2^2$, when $\mathbf{y}=\mathbf{A}\mathbf{x}$.

To see why, it suffices to show that $\mathbb{E} y_r^2 = \|x\|^2_2$:
\begin{equation} \label{eq:row-length}
\begin{aligned}
&\mathbb{E}{y_r^2} = \mathbb{E}{\sum_{s_1=1}^d\sum_{s_2=1}^d \mathbf{A}(r,s_1)\mathbf{A}(r,s_2)x_{s_1}x_{s_2}} \\
&= \sum_{s=1}^d \mathbf{A}^2(r,s)x_s^2 = \|\mathbf{x}\|^2_2,  \nonumber
\end{aligned}
\end{equation}
where the first equation on the second line comes from the fact that $\mathbb{E} {\mathbf{A}(r,s_1)\mathbf{A}(r,s_2)}  = 0$ for $s_1\neq s_2$ and the second equation on the second line uses $\mathbb{E}{\mathbf{A}^2(r,s)} =1$.

Now we prove Lemma \ref{lemma: norm-preserve} by induction.
We first show that for two matrices $\mathbf{B}_1 \in \mathbb{R}^{d_1\times k}, \mathbf{B}_2 \in \mathbb{R}^{d_2\times k}$
whose entries have unit variance and are uncorrelated within the same row,
%  satisfy the two conditions in lemma \ref{lemma: norm-preserve}: $\mathbb{E} \mathbf{B}^2_{n}(r,s)=1$
% and  $\mathbb{E} [ \mathbf{B}_{n}(r_1,s)\mathbf{B}_{n}(r_2,s)] =0$ for $n=1,2, s\in [d], r, r_1\neq r_2\in [d_n]$,
the Khatri-Rao product $\mathbf{A} = (\mathbf{B}_1 \odot\mathbf{B}_2 )^\top $
also has entries with unit variance that are uncorrelated within the same row.

For the proof, it suffices to restrict our focus to the first row of $\mathbf{\Omega}$.% and we apply the multi-index to it.
For any $1\le r_1\le d_1, 1\le r_2\le d_2$,
\begin{equation}
\begin{aligned}
&\mathbb{E}  \mathbf{A}^2_{1}(k_1,k_2) = \mathbb{E} \mathbf{B}^2_{1}(k_1,1)\mathbf{B}^2_{2}(k_2,1)\\
& =  \mathbb{E} \mathbf{B}^2_{1}(k_1,1) \mathbb{E} \mathbf{B}^2_{2}(k_1,1) = 1,
\nonumber
\end{aligned}
\end{equation}
using the independence between $\mathbf{B}_1$ and $\mathbf{B}_2$.
To avoid confusion in notation, we argue that $ \mathbf{A}(1,\cdot)$ is the first row vector of $ \mathbf{A}$ of size $d_1d_2$, and we apply the multi-index to it.  Also, for two different elements in the first row of $\mathbf{A}$: $ \mathbf{A}_{1}(k_1,k_2) \mathbf{A}_{1}(s_1,s_2)$ at least one of $k_1\neq s_1$, $k_2\neq s_2$ hold.
Without loss of generality, assuming $k_1\neq s_1$,
\begin{equation}
\begin{aligned}
&\mathbb{E}  \mathbf{A}_{1}(k_1,k_2) \mathbf{A}_{1}(s_1,s_2)  = \mathbb{E} \mathbf{B}_{1}(k_1,1)\mathbf{B}_{2}(k_2,1)\mathbf{B}_{1}(s_1,1)\mathbf{B}_{2}(s_2,1)\\
& =  \mathbb{E} \mathbb{E}\left[ \mathbf{B}_{1}(k_1,1)\mathbf{B}_{1}(k_2,1)\mathbf{B}_{2}(k_2,1)\mathbf{B}_{2}(s_2,1)  \mid  \mathbf{B}_{2}(k_2,1)\mathbf{B}_{2}(s_2,1)\right]\\
& =  \mathbb{E}  \mathbf{B}_{2}(k_2,1)\mathbf{B}_{2}(s_2,1) \mathbb{E}\left[ \mathbf{B}_{1}(k_1,1)\mathbf{B}_{1}(s_1,1) \right]  = 0,
\nonumber
\end{aligned}
\end{equation}
where we use the fact that entries in the same row for each constituent matrix $B_1$ and $B_2$ are mutually uncorrelated.

Noting that the two important properties --- unit variance and zero correlation within rows ---
are preserved by the Khatri-Rao product,
we can finish the proof of the lemma, for the case of TRP(1), with a standard mathematical induction argument.
For TRP(T),
\begin{equation}
\begin{aligned}
&\mathbb{E}\|f_{\text{TRP(T)}}(\mathbf{x})\|^2_2=\frac{1}{T} \mathbb{E}\|\sum_{t=1}^T f^{(t)}_{\textup{TRP}}(\mathbf{x})\|^2_2\\
&= \frac{1}{T}\sum_{t=1}^T\mathbb{E} \|f^{(t)}_{\textup{TRP}}(\mathbf{x})\|^2_2= \|x\|^2_2,
\nonumber
\end{aligned}
\end{equation}
where in the second line we use the fact that each $f^{(t)}_{\textup{TRP}}$ is independent with each other.
\end{proof}

Next we introduce a lemma that shows we can control the deviation of the inner product after applying a map
by controlling the deviation of the square norm.
This result is well known in the literature on random projections.

\begin{lem}
\label{lemma:inner-product}
For a linear mapping from $\mathbb{R}^d\rightarrow \mathbb{R}^k$: $f(\mathbf{x}) = \frac{1}{\sqrt{k}}\mathbf{\Omega x}$,
\begin{equation}
\label{eq:inner-bound}
\mathbb{P}(|\langle f(\mathbf{x}), f(\mathbf{y})\rangle - \langle \mathbf{x}, \mathbf{y}\rangle|\ge \epsilon |\langle \mathbf{x}, \mathbf{y}\rangle|) \le 2\sup_{\mathbf{x}\in \mathbb{R}^{d}}\mathbb{P}(| \|f(\mathbf{x})\|^2-\|\mathbf{x}\|^2|\ge \epsilon \|\mathbf{x}\|_2^2).\nonumber
\end{equation}
\end{lem}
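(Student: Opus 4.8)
The plan is to reduce the inner-product statement to the squared-norm statement already controlled on the right-hand side, via the polarization identity, the linearity of $f$, and a union bound over two events. First I would exploit the scale invariance of the claimed inequality: rescaling $\mathbf{x}\mapsto a\mathbf{x}$ multiplies, by linearity of $f$, both $\langle f(\mathbf{x}),f(\mathbf{y})\rangle-\langle\mathbf{x},\mathbf{y}\rangle$ and the threshold $\epsilon|\langle\mathbf{x},\mathbf{y}\rangle|$ by $|a|$, so the event on the left is unchanged and its probability is invariant. Hence I may assume $\|\mathbf{x}\|_2=\|\mathbf{y}\|_2=1$, and after possibly replacing $\mathbf{y}$ by $-\mathbf{y}$ (which only flips signs and leaves every event's probability unchanged) I may assume $\langle\mathbf{x},\mathbf{y}\rangle\ge 0$. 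The supremum on the right is already taken over all vectors, so it is insensitive to this normalization.

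Next I would apply polarization. Because $f$ is linear, $f(\mathbf{x}\pm\mathbf{y})=f(\mathbf{x})\pm f(\mathbf{y})$, so
\[
\langle f(\mathbf{x}),f(\mathbf{y})\rangle=\tfrac14\big(\|f(\mathbf{x}+\mathbf{y})\|^2-\|f(\mathbf{x}-\mathbf{y})\|^2\big),
\]
and identically $\langle\mathbf{x},\mathbf{y}\rangle=\tfrac14(\|\mathbf{x}+\mathbf{y}\|^2-\|\mathbf{x}-\mathbf{y}\|^2)$. Subtracting, the inner-product deviation becomes a difference of two squared-norm deviations,
\[
\langle f(\mathbf{x}),f(\mathbf{y})\rangle-\langle\mathbf{x},\mathbf{y}\rangle=\tfrac14\big[(\|f(\mathbf{x}+\mathbf{y})\|^2-\|\mathbf{x}+\mathbf{y}\|^2)-(\|f(\mathbf{x}-\mathbf{y})\|^2-\|\mathbf{x}-\mathbf{y}\|^2)\big].
\]

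I would then apply the squared-norm guarantee to the two fixed vectors $\mathbf{x}+\mathbf{y}$ and $\mathbf{x}-\mathbf{y}$. On the complement of the two ``bad'' events $\{|\|f(\mathbf{x}\pm\mathbf{y})\|^2-\|\mathbf{x}\pm\mathbf{y}\|^2|\ge\epsilon\|\mathbf{x}\pm\mathbf{y}\|^2\}$, the triangle inequality yields $|\langle f(\mathbf{x}),f(\mathbf{y})\rangle-\langle\mathbf{x},\mathbf{y}\rangle|<\tfrac{\epsilon}{4}(\|\mathbf{x}+\mathbf{y}\|^2+\|\mathbf{x}-\mathbf{y}\|^2)=\tfrac{\epsilon}{2}(\|\mathbf{x}\|^2+\|\mathbf{y}\|^2)$. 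Taking contrapositives, the inner-product bad event is contained in the union of the two squared-norm bad events; a union bound, with each term dominated by $\sup_{\mathbf{x}}\mathbb{P}(|\|f(\mathbf{x})\|^2-\|\mathbf{x}\|^2|\ge\epsilon\|\mathbf{x}\|^2)$, then produces exactly the factor $2$ appearing on the right-hand side.

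The main obstacle is the final thresholding step. Polarization delivers the deviation threshold $\tfrac{\epsilon}{2}(\|\mathbf{x}\|^2+\|\mathbf{y}\|^2)$, which under the normalization $\|\mathbf{x}\|=\|\mathbf{y}\|$ equals $\epsilon\|\mathbf{x}\|_2\|\mathbf{y}\|_2$, whereas the statement requests the smaller quantity $\epsilon|\langle\mathbf{x},\mathbf{y}\rangle|$. Since $|\langle\mathbf{x},\mathbf{y}\rangle|\le\|\mathbf{x}\|\|\mathbf{y}\|$ with equality only when $\mathbf{x}$ and $\mathbf{y}$ are parallel, sharpening the bound from $\|\mathbf{x}\|\|\mathbf{y}\|$ to $|\langle\mathbf{x},\mathbf{y}\rangle|$ is the genuine difficulty, and it is most severe in the near-orthogonal regime where the two thresholds differ the most. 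To close this gap I would either optimize the polarization coefficients $a\mathbf{x}\pm b\mathbf{y}$ against the geometry of the pair, or restrict to the strongly-aligned setting of the intended application (normalized, positively correlated data), treating the (near-)orthogonal case separately; identifying the precise hypotheses under which the $|\langle\mathbf{x},\mathbf{y}\rangle|$ scaling is legitimate is the crux of the argument.
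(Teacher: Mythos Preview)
Your approach is exactly the paper's: polarization identity, two norm-deviation events $\mathcal{A}_1,\mathcal{A}_2$ for $\mathbf{x}\pm\mathbf{y}$, then a union bound. The paper computes, on $\mathcal{A}_1^\complement\cap\mathcal{A}_2^\complement$,
\[
4\langle f(\mathbf{x}),f(\mathbf{y})\rangle \ge (1-\epsilon)\|\mathbf{x}+\mathbf{y}\|^2-(1+\epsilon)\|\mathbf{x}-\mathbf{y}\|^2
= 4\langle\mathbf{x},\mathbf{y}\rangle - 2\epsilon(\|\mathbf{x}\|^2+\|\mathbf{y}\|^2),
\]
and then writes ``noticing $\|\mathbf{x}\|^2+\|\mathbf{y}\|^2\ge 2\langle\mathbf{x},\mathbf{y}\rangle$'' to conclude the inclusion $\{|\langle f(\mathbf{x}),f(\mathbf{y})\rangle-\langle\mathbf{x},\mathbf{y}\rangle|\ge\epsilon|\langle\mathbf{x},\mathbf{y}\rangle|\}\subseteq\mathcal{A}_1\cup\mathcal{A}_2$.

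The obstacle you flagged is genuine, and the paper does not resolve it either: the inequality $\|\mathbf{x}\|^2+\|\mathbf{y}\|^2\ge 2\langle\mathbf{x},\mathbf{y}\rangle$ points the wrong way for the claimed inclusion. Polarization only yields a deviation of size $\tfrac{\epsilon}{2}(\|\mathbf{x}\|^2+\|\mathbf{y}\|^2)$, which dominates $\epsilon|\langle\mathbf{x},\mathbf{y}\rangle|$ rather than being dominated by it. Indeed, for nearly orthogonal $\mathbf{x},\mathbf{y}$ the left-hand event in the lemma has probability close to $1$ while the right-hand side need not, so the statement as written cannot hold in general. What both your argument and the paper's actually establish is the version with threshold $\tfrac{\epsilon}{2}(\|\mathbf{x}\|^2+\|\mathbf{y}\|^2)$ (equivalently $\epsilon\|\mathbf{x}\|\|\mathbf{y}\|$ after normalization) in place of $\epsilon|\langle\mathbf{x},\mathbf{y}\rangle|$; that is the standard Johnson--Lindenstrauss inner-product bound and is what the downstream applications in the paper require. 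Your diagnosis of the gap is correct; there is nothing further to extract from the paper's argument.
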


\begin{proof}
Since $f$ is a linear mapping, we have
\[
4f(\mathbf{x})f(\mathbf{y}) = \|f(\mathbf{x}+\mathbf{y})\|^2_2-  \|f(\mathbf{x}-\mathbf{y})\|^2_2.
\]
Consider the event

\begin{equation}
\begin{aligned}
 &\mathcal{A} _1=  \left\{ |\|f(\mathbf{x}+\mathbf{y})\|^2_2-\|\mathbf{x}+\mathbf{y}\|^2_2|\ge \epsilon \|\mathbf{x}+\mathbf{y}\|_2^2 \right\}\\
 &\mathcal{A} _2=  \left\{ | \|f(\mathbf{x}-\mathbf{y})\|^2_2-\|\mathbf{x}-\mathbf{y}\|^2_2|\ge \epsilon \|\mathbf{x}-\mathbf{y}\|_2^2 \right\} \nonumber
 \end{aligned}
 \end{equation}

 On the event $\mathcal{A}_1^\complement \cap\mathcal{A}_2^\complement$,
 \[
 4f(\mathbf{x})f(\mathbf{y})  \ge (1-\epsilon) (\mathbf{x}+\mathbf{y})^2 - (1+\epsilon) (\mathbf{x}-\mathbf{y})^2 = 4 \langle \mathbf{x}, \mathbf{y}\rangle -2\epsilon (\|\mathbf{x}\|^2+\|\mathbf{y}\|^2),
 \]
 noticing $\|\mathbf{x}\|^2+\|\mathbf{y}\|^2\ge 2\langle \mathbf{x},  \mathbf{y}\rangle$, and by similar argument on the other side of the inequality, we could claim that
\[
\left\{|\langle f(\mathbf{x}), f(\mathbf{y})\rangle - \langle \mathbf{x}, \mathbf{y}\rangle|\ge \epsilon |\langle \mathbf{x}, \mathbf{y}\rangle|\right\} \subseteq  \mathcal{A}_1 \cup \mathcal{A}_2.
\]
We finish the proof by simply applying the union bound of the two events.
\end{proof}
\begin{remark}
One key element of classic random projection results is the dimension-free bound.
According to Prop. \ref{prop: N-2-bound}, our TRP has a norm preservation bound independent of the particular vector  $\mathbf{x}$ and dimension $d$ and thus a dimension-free inner product preservation bound according to Lemma \ref{eq:inner-bound}.
\end{remark}

\subsection*{Proof for Lemma \ref{lemma:variance}}
\begin{proof}
Let $\mathbf{y}=\mathbf{Ax}$. We know from Lemma \ref{lemma: norm-preserve} that $\mathbb{E}\|f_{\textup{TRP}}(\mathbf{x})\|^2_2 = \frac{1}{k}\mathbb{E}\|\mathbf{Ax}\|^2= \|\mathbf{x}\|^2_2$.
Notice
\[
\mathbb{E}(\|f_{\text{TRP(T)}}(\mathbf{x})\|^2_2) = \|\mathbf{x}\|^2_2,
\]
and $\mathbb{E} y_1^2=\|x\|_2^2$ as shown in the poof of Lemma \ref{lemma: norm-preserve}. It is easy to see that
\[
\mathbb{E}\|\mathbf{y}\|^4_2 = \sum_{i=1}^k \mathbb{E} y_i^4 + \sum_{i\neq j} \mathbb{E} y_i^2y_j^2.
\]
Again, as shown in Lemma \ref{lemma: norm-preserve}, $\mathbb{E} y_i^2y_j^2 =\mathbb{E} y_i^2 \mathbb{E} y_j^2 = \|\mathbf{x}\|^4$. To find $\mathbb{E}\|\mathbf{y}\|^4_2$, it suffices to find $\mathbb{E} y_1^4$ by noticing that $y_i$ are i.i.d. random variables. Let $\Omega$ be the set containing all corresponding multi-index vector for $\{1,\cdots, \prod_{n=1}^N d_n\}$.
\begin{equation}
\begin{aligned}
&y_1^4 = \left[\sum_{\mathbf{r}\in \Omega} \mathbf{A}(1,\mathbf{r}) x_{\mathbf{r}}\right]^4 = \sum_{\mathbf{r}\in \Omega} \mathbf{A}^4(1,\mathbf{r}) x^4_{\mathbf{r}} + 3\sum_{\mathbf{r}_1 \neq \mathbf{r}_2 \in \Omega} \mathbf{A}^2(1,\mathbf{r}_1) x^2_{\mathbf{r}_1}\mathbf{A}^2(1,\mathbf{r}_2) x^2_{\mathbf{r}_2}\\
&+6\sum_{\mathbf{r}_1 \neq \mathbf{r}_2 \neq \mathbf{r}_3 \in \Omega} \mathbf{A}^2(1,\mathbf{r}_1) x_\mathbf{\mathbf{r}_1} \mathbf{A}(2,\mathbf{r}_2)x_{\mathbf{r}_2}\mathbf{A}(3,\mathbf{r}_3)x_{\mathbf{r_3}}+4\sum_{\mathbf{r}_1 \neq \mathbf{r}_2 \in \Omega} \mathbf{A}^3(1,\mathbf{r}_1) x^3_{\mathbf{r}_1}\mathbf{A}(1,\mathbf{r}_2) x_{\mathbf{r}_2}\\
&+\sum_{\mathbf{r}_1 \neq \mathbf{r}_2 \neq \mathbf{r}_3\neq \mathbf{r}_4 \in \Omega} \mathbf{A}(1,\mathbf{r}_1) x_{\mathbf{r}_1}\mathbf{A}(1,\mathbf{r}_2) x_{\mathbf{r}_2}\mathbf{A}(1,\mathbf{r}_3) x_{\mathbf{r}_3}\mathbf{A}(1,\mathbf{r}_4) x_{\mathbf{r}_4}.\nonumber
\end{aligned}
\end{equation}
Recall that elements of $A$ are uncorrelated within rows, as shown in the proof of in Lemma \ref{lemma: norm-preserve}.
Hence the expectation of the terms on the second and third lines is zero.
Again using uncorrelatedness within rows,
\[
\mathbb{E} \mathbf{A}^2(1,\mathbf{r}_1)\mathbf{A}^2(1,\mathbf{r}_2) = \mathbb{E} \mathbf{A}^2(1,\mathbf{r}_1) \mathbb{E}  \mathbf{A}^2(1,\mathbf{r}_2)  = 1.
\]
Each element of $A$ has fourth moment $\Delta$, so
\[
\mathbb{E} \mathbf{A}^4(1,\mathbf{r}) = \mathbb{E} \mathbf{A}^4_1 (1, r_1) \cdots \mathbf{A}^4_N(1, r_N) = \Delta^N.
\]
Combining these two together, we have
\begin{equation}
\begin{aligned}
&\mathbb{E}\|f_{\textup{TRP}}(\mathbf{x})\|^4 = \frac{1}{k^2}\left[k(\Delta^N-3)\|\mathbf{x}\|_4^4 +3k\|\mathbf{x}\|_2^4  +(k-1)k \|\mathbf{x}\|_2^4\right]\\
& =  \frac{1}{k}\left[(\Delta^N-3)\|\mathbf{x}\|_4^4 +2\|\mathbf{x}\|_2^4\right]+\|\mathbf{x}\|_2^4.  \nonumber
\end{aligned}
\end{equation}
Therefore,
\[
\textrm{Var}(\|f_{\textup{TRP}}(\mathbf{x})\|^2_2) = \mathbb{E}\|f_{\textup{TRP}}(\mathbf{x})\|^4_2 -  (\mathbb{E}\|f_{\textup{TRP}}(\mathbf{x})\|^2_2)^2 = \frac{1}{k}\left[(\Delta^N-3)\|\mathbf{x}\|_4^4 +2\|\mathbf{x}\|_2^4\right].
\]
Now we switch to see how much variance could be reduced by the variance reduction method. With Lemma \ref{lemma: norm-preserve}, we already know that $\mathbb{E} \|f_{\text{TRP(T)}}(\mathbf{x})\|^2_2= \|\mathbf{x}\|_2^2$. The rest is to calculate $\mathbb{E} \|f_{\text{TRP(T)}}(\mathbf{x})\|^4_2$ out.
\begin{equation}
\begin{aligned}
&\|f_{\text{TRP(T)}}(\mathbf{x})\|^4_2 = \frac{1}{T^2}\left[\sum_{t=1}^T \|f^{(t)}_{\textup{TRP}}(\mathbf{x})\|^2_2+\sum_{t_1 \neq t_2} \langle f^{(t_1)}_{\textup{TRP}}(\mathbf{x}), f^{(t_2)}_{\textup{TRP}}(\mathbf{x}) \rangle \right]^2\\
&= \frac{1}{T^2} \left[ \sum_{t=1}^T \|f^{(t)}_{\textup{TRP}}(\mathbf{x})\|^4_2+\sum_{t_1\neq t_2}\|f_{\textup{TRP}}^{(t_1)}(\mathbf{x})\|^2_2\|f_{\textup{TRP}}^{(t_2)}(\mathbf{x})\|^2_2 +2\sum_{t_1\neq t_2}\langle f^{(t_1)}_{\textup{TRP}}(\mathbf{x}), f^{(t_2)}_{\textup{TRP}}(\mathbf{x}) \rangle^2 +\text{rest} \right].\nonumber
\end{aligned}
\end{equation}
It is not hard to show that $\mathbb{E}(\text{rest}) = 0$. Following the definition of $\mathbf{y}$,
\begin{equation}
\mathbb{E} \|f_{\textup{TRP}}^{(t_1)}(\mathbf{x})\|^2_2\|f_{\textup{TRP}}^{(t_2)}(\mathbf{x})\|^2_2= \|\mathbf{x}\|_2^4 \nonumber
\end{equation}
and
\begin{equation}
\begin{aligned}
&\mathbb{E} \langle f^{(t_1)}_{\textup{TRP}}(\mathbf{x}), f^{(t_2)}_{\textup{TRP}}(\mathbf{x}) \rangle^2\\ &=\frac{1}{k^2} \mathbb{E} \left[ \sum_{i=1}^k y^{(t_1)}_i y^{(t_2)}_i \right]^2 = \frac{1}{k}  \mathbb{E} [y^{(t_1)}_1 y^{(t_2)}_1]^2 = \frac{1}{k}\|x\|_2^4. \nonumber
\end{aligned}
\end{equation}
Combining all these together, we see that
\begin{equation}
\begin{aligned}
\textrm{Var}(\|f_{\text{TRP(T)}}(\mathbf{x})\|_2^2) &= \mathbb{E}\|f_{\text{TRP(T)}}(\mathbf{x})\|^4_2 -  (\mathbb{E}\|f_{\text{TRP(T)}}(\mathbf{x})\|^2_2)^2\\
&= \frac{1}{T^2} \left[ \frac{T}{k}\left[(\Delta^N-3)\|\mathbf{x}\|_4^4 +2\|\mathbf{x}\|_2^4 \right]\right.\\
&\left. +T(T-1)\|\mathbf{x}\|_2^4+T\|\mathbf{x}\|_2^4+\frac{2T(T-1)}{k}\|\mathbf{x}\|_2^4 \right] - \|\mathbf{x}\|_2^4 \\
&= \frac{1}{Tk}(\Delta^N-3)\|\mathbf{x}\|_4^4 + \frac{2}{k}\|\mathbf{x}\|_2^4. \nonumber
\end{aligned}
\end{equation}
\end{proof}

The proof of our main theorem requires an additional definition.

\begin{definition}
	\label{def:generalized-sub-exponential-mc}
	A random variable $x$ is said to satisfy the generalized-sub-exponential moment condition with constant $\alpha$, if for general positive integer $k$, there exists a general constant $C$(not depending on k), so that
	\begin{equation}
	\mathbb{E} |x|^k \le (Ck)^{k \alpha}.
	\end{equation}
\end{definition}

Before we present our proof, we state a simplification of Lemma B.2 from \cite{erdHos2012bulk}
that we will use to prove the main theorem.
% Here we use only a part of their results, so we present this simplification using definition \ref{def:generalized-sub-exponential-mc}.
For details of the proof, we refer readers to \cite{erdHos2012bulk}.

\begin{lem}
\label{lemma:sub-exponential-deviation}
Suppose the i.i.d. random variables $x_i$ have mean 0 and variance 1,
and satisfy Definition \ref{def:generalized-sub-exponential-mc} with parameter $\alpha$.
% Then for any real numbers $B_{ii}$, we have
% \begin{equation}
% \mathbb{P}\left(\left|\sum_{i=1}^n B_{ii} x_i^2 - \sum_{i=1}^n B_{ii}^2 \right| > D\sqrt{\sum_{i=1}^n B_{ii}^2}\right) \le C \exp(-c D^{\frac{1}{1+\alpha}})
% \end{equation}
Then
\begin{equation}
\mathbb{P}\left(\left|\frac 1 n \sum_{i=1}^n x_i^2 - 1 \right| > Dn^{-1/2}\right) \le C \exp(-c D^{\frac{1}{1+\alpha}})
\end{equation}
for some $c$ depending on $\alpha$.
\end{lem}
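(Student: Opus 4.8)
The plan is to reduce the claim to a standard large-deviation bound for a sum of i.i.d.\ mean-zero variables and then attack that sum by a truncation argument. First I would set $Y_i = x_i^2 - 1$. Since $\mathbb{E}x_i^2 = \var(x_i) + (\mathbb{E}x_i)^2 = 1$, the $Y_i$ are i.i.d.\ with $\mathbb{E}Y_i = 0$, and the event in question is exactly $\{|\sum_{i=1}^n Y_i| > D\sqrt{n}\}$ after multiplying through by $n$. So it suffices to bound the probability that the centered sum $S_n = \sum_{i=1}^n Y_i$ exceeds the scale $t = D\sqrt{n}$ in absolute value. The moment hypothesis transfers to the summands: from $\mathbb{E}|x_i|^k \le (Ck)^{k\alpha}$ (Definition \ref{def:generalized-sub-exponential-mc}) and Markov's inequality optimized over $k$, each $x_i$ has a stretched-exponential tail, and hence so does $Y_i$; equivalently, $Y_i$ obeys a generalized-sub-exponential moment bound of the same type. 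This tail decay is the only feature of the distribution the argument will use.

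Next I would run a truncation-plus-Bernstein argument, the standard route for sums with stretched-exponential (rather than sub-Gaussian) tails. Fix a truncation level $L$ and write $Y_i = Y_i\mathbf{1}\{|Y_i| \le L\} + Y_i\mathbf{1}\{|Y_i| > L\}$. A union bound controls the contribution of the large part: the probability that any $|Y_i|$ exceeds $L$ is at most $n$ times the single-variable tail, i.e.\ of order $n\exp(-cL^{\gamma})$, where the tail exponent $\gamma$ is read off from the moment condition. On the complementary event every summand is bounded by $L$ and has variance at most a fixed constant, so Bernstein's inequality applied to the centered truncated sum yields a bound of the form $\exp(-c\min\{t^2/n, t/L\})$. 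One technical point to dispatch here is the mean shift introduced by truncation: $\mathbb{E}[Y_i\mathbf{1}\{|Y_i|\le L\}]$ is no longer zero, but the discarded tail is stretched-exponentially small in $L$, so the correction to the centering is negligible at the relevant scales and can be absorbed into the budget for $t$.

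Finally I would optimize the free parameter $L$. Substituting $t = D\sqrt{n}$ turns the Bernstein term into $\exp(-c\min\{D^2, D\sqrt{n}/L\})$, while the union-bound term stays $n\exp(-cL^{\gamma})$; choosing $L$ to balance the stretched-exponential tail against the Bernstein exponent equalizes the two contributions and produces a rate in $D$ of the form $C\exp(-cD^{1/(1+\alpha)})$, with the $n$-dependence entering only through factors dominated by the exponential decay, so that the final bound is uniform in $n$. I expect the main obstacle to be bookkeeping rather than conceptual: getting the balance of $L$ to deliver the precise exponent, tracking which of the two Bernstein regimes ($D^2$ versus $D\sqrt{n}/L$) is active at the optimum, and verifying that the truncation mean-correction and the polynomial $n$ prefactor from the union bound do not degrade the rate. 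This is exactly the balancing computation carried out in Lemma~B.2 of \cite{erdHos2012bulk}, to which the detailed constants can be deferred.
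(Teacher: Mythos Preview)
The paper does not supply its own proof of this lemma: it presents the statement as ``a simplification of Lemma~B.2 from \cite{erdHos2012bulk}'' and explicitly writes ``For details of the proof, we refer readers to \cite{erdHos2012bulk}.'' Your proposal sketches the standard truncation-plus-Bernstein argument that underlies that cited result and correctly points back to the same reference at the end, so you are aligned with---and in fact more explicit than---the paper itself.
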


\subsection*{Proof for Theorem \ref{prop: N-2-bound}}
\begin{proof}
From now on, without loss of generality, we will assume $\|x\|=1$.  Let
\begin{equation}
\mathbf{y}= \frac{1}{\sqrt{k}}(\mathbf{A}_1 \odot \mathbf{A}_2)^\top \mathbf{x}, \nonumber
\end{equation}
Lemma \ref{lemma: norm-preserve} asserts that $\mathbb{E}\|\mathbf{y}\|^2_2= \|\mathbf{x}\|^2_2$,
as the conditions required for Lemma \ref{lemma: norm-preserve} hold for i.i.d. random variables with mean 0 and variance 1.
The key observation is that for each $i\in [k]$, $y_i$ is some quadratic function of
the $i$th column of the matrices $\mathbf{A}_1$ and $\mathbf{A}_2$.
As a quadratic function of \emph{mutually independent, mean 0, sub-Gaussian} variables, each $y_i$
satisfies a concentration bound: for example,
the Hanson-Wright inequality (Lemma \ref{lemma:hanson_wright}).
% Then we could use Hanson-Wright inequality to determine the constants in moments condition \ref{def:generalized-sub-exponential-mc} which shall present tighter bound compared to directly citing results of linear combination of sub-exponential random variable defined in \eqref{def:generalized-sub-exponential-mc}

Let's be explicit.
We aim to write $y_i$ as a quadratic form of $\mathbf{z}_i:=[\mathop{\mathbf{vec}}(\mathbf{A}_{1}(\cdot,i)); \mathop{\mathbf{vec}}(\mathbf{A}_{2}(\cdot,i))]$. Also, for convenience, we partition $\mathbf{x}$ into $d_1$ sub-vectors with equal length $d_2$ i.e., $\mathbf{x} = [\mathbf{x}_1; \cdots; \mathbf{x}_{d_1}]$.
To be clear, we write $y_1$ as a quadratic form of $\mathbf{z}_1$ first.
\begin{equation}
y_1 = \langle [\mathbf{A}_{1}(1,1) \mathbf{A}_{2}(\cdot,1); \cdots; \mathbf{A}_{1}(d_1,1) \mathbf{A}_{2}(\cdot,1)], [\mathbf{x}_1;\cdots; \mathbf{x}_{d_1}]\rangle
\nonumber
\end{equation}
which indicates that we could write
\begin{equation}
y_1 = \mathbf{z}^\top_1
\mathbf{M}\mathbf{z}_1, \nonumber
\end{equation}
where
\begin{equation}
\mathbf{M} = \begin{bmatrix}
\mathbf{0} & \mathbf{D}\\
\mathbf{0} & \mathbf{0}
\end{bmatrix}  ~~
\mathbf{D} = \begin{bmatrix}
\mathbf{x}_1^\top \\
\vdots \\
\mathbf{x}_{d_1}^\top.  \nonumber
\end{bmatrix}
\end{equation}
It is easy to see that $\|\mathbf{M}\| \le \|\mathbf{D}\| \le \|\mathbf{D}\|_F = \|\mathbf{M}\|_F = 1$ by assuming $\|\mathbf{x}\| = 1$.

Now applying the Hanson Wright inequality (Lemma \ref{lemma:hanson_wright}),
we see that for any positive number $\eta$, there exists a general constant $c_1$ so that
\begin{equation}
\begin{aligned}
 \mathbb{P}(|y_i|\ge \eta) & \le 2\exp\left[-c_1\min\left\{ -\frac{\eta}{\varphi_2^2 \|M\|} ,  \frac{\eta^2}{\varphi_2^4 \|M\|^2_F} \right\} \right] \\
& \le  2\exp\left[-c_1\min\left\{ -\frac{\eta}{\varphi_2^2} ,  \frac{\eta^2}{\varphi_2^4 } \right\} \right].  \nonumber
\end{aligned}
\end{equation}

Next, using Lemma \ref{lemma:hanson-wright-sub-exponential}, we assert that there is a constant $C$
depending on the constant $c_1$ and the sub-Gaussian norm $\varphi_2$ so that
\begin{equation}
\mathbb{E} |y_i|^k \le (Ck)^k. \nonumber
\end{equation}
In fact, we can use the lemma to find an explicit constant $C$ that suffices:
\begin{equation}\label{eq:constant_in-sub-exponential}
C = 1+ \frac{c_1}{\min\left\{\varphi_2^2, \varphi_2^4\right\}}.
\end{equation}

Finally, notice the $y_i$ satisfy the assumptions of Lemma \ref{lemma:sub-exponential-deviation}.
In particular, the columns of $A_1 \odot A_2$ are iid,
and so the $y_i$ are iid.
%  has mean zero and variance 1 (assuming $\|x\|=1$),
% so we may apply  by setting $B_{ii} = 1$, we have
We use the lemma to see that
\begin{equation}
\begin{aligned}
& \mathbb{P}\left(\left|\frac{1}{k} \mathbf{y}^\top \mathbf{I}_{k,k} \mathbf{y}-1 \right|\ge \epsilon\right )  \le C\exp\left( - c_2 \left[\sqrt{k}\epsilon\right]^{1/2} \right),
\nonumber
\end{aligned}
\end{equation}
where $C$ is defined in \eqref{eq:constant_in-sub-exponential} for some constant $c_2$ with $\alpha$ set to be 1.

\end{proof}

\clearpage

\section{More Simulation Results}\label{appendix:more_result}

\paragraph{Pairwise Distance Estimation}
In Figure \ref{fig:gaussian}, \ref{fig:sparse}, \ref{fig:very_sparse}, we compare the performance of Gaussian, Sparse, Very Sparse random maps on the pairwise distance estimation problem with $d = 2500, 10000, 40000, N= 2$. Additionally, we compare their performance for $d = 125000, N = 3$ in Figure \ref{fig:triple_krao}.

\begin{figure}[ht!]
	\centering
	\begin{subfigure}{0.32\textwidth}
		\includegraphics[scale = 0.29]{figure/dist_g_d2500.pdf}
	\end{subfigure}
	\begin{subfigure}{0.32\textwidth}
		\includegraphics[scale = 0.29]{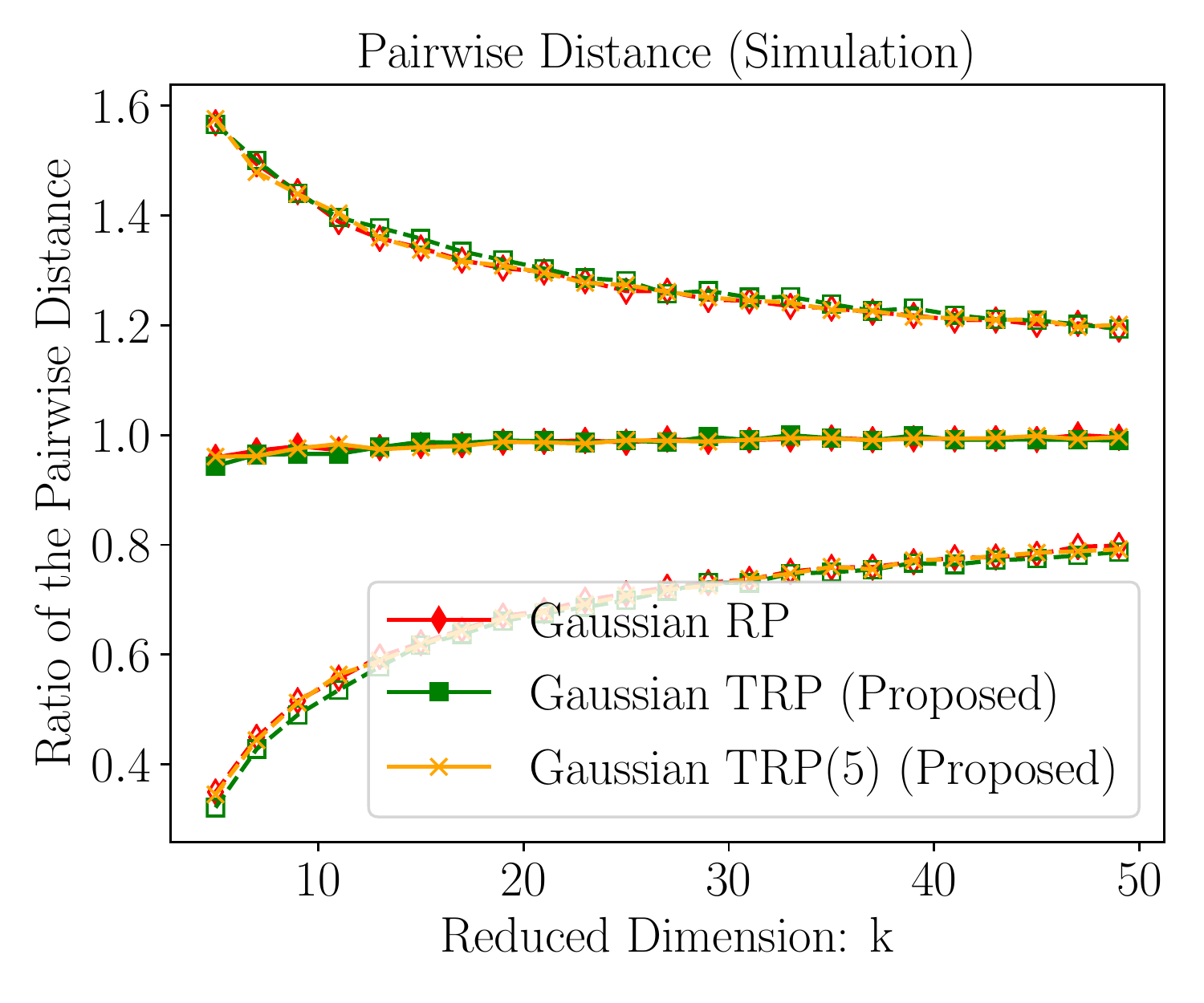}
	\end{subfigure}
	\begin{subfigure}{0.32\textwidth}
		\includegraphics[scale = 0.29]{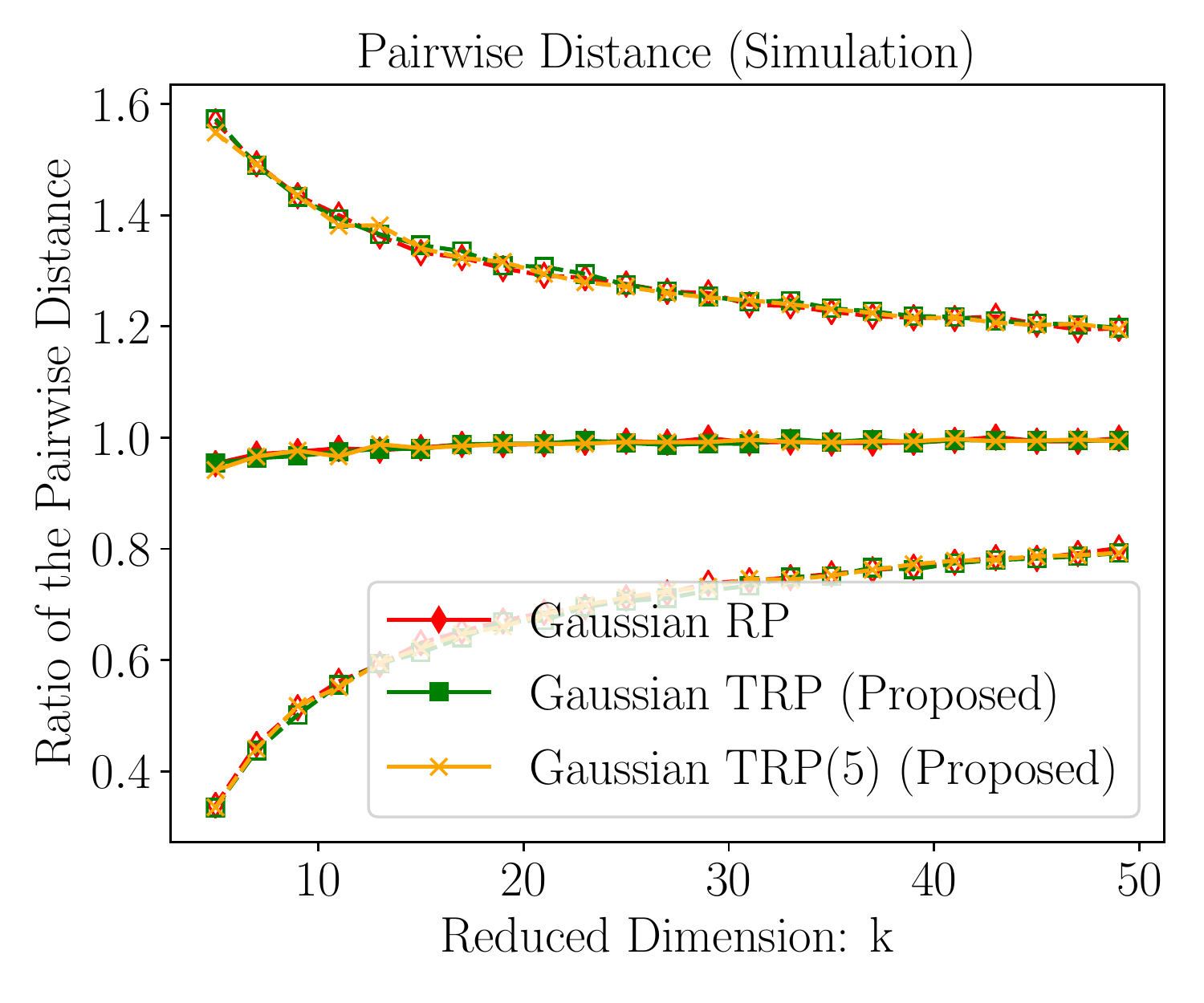}
	\end{subfigure}
	\\
	\caption{Average ratio of the pairwise distance for simulation data using Gaussian RP: \textit{The  plots correspond to the simulation for Gaussian RP, TRP, TRP(5) respectively with $n = 20, d = 2500, 10000, 40000$ and each data vector comes from $N(\mathbf{0}, \mathbf{I})$. The dashed line represents the error bar 2 standard deviation away from the average ratio.}} 
	\label{fig:gaussian}
\end{figure}

\begin{figure*}[ht!]
	\centering
	\begin{subfigure}{0.32\textwidth}
		\includegraphics[scale = 0.29]{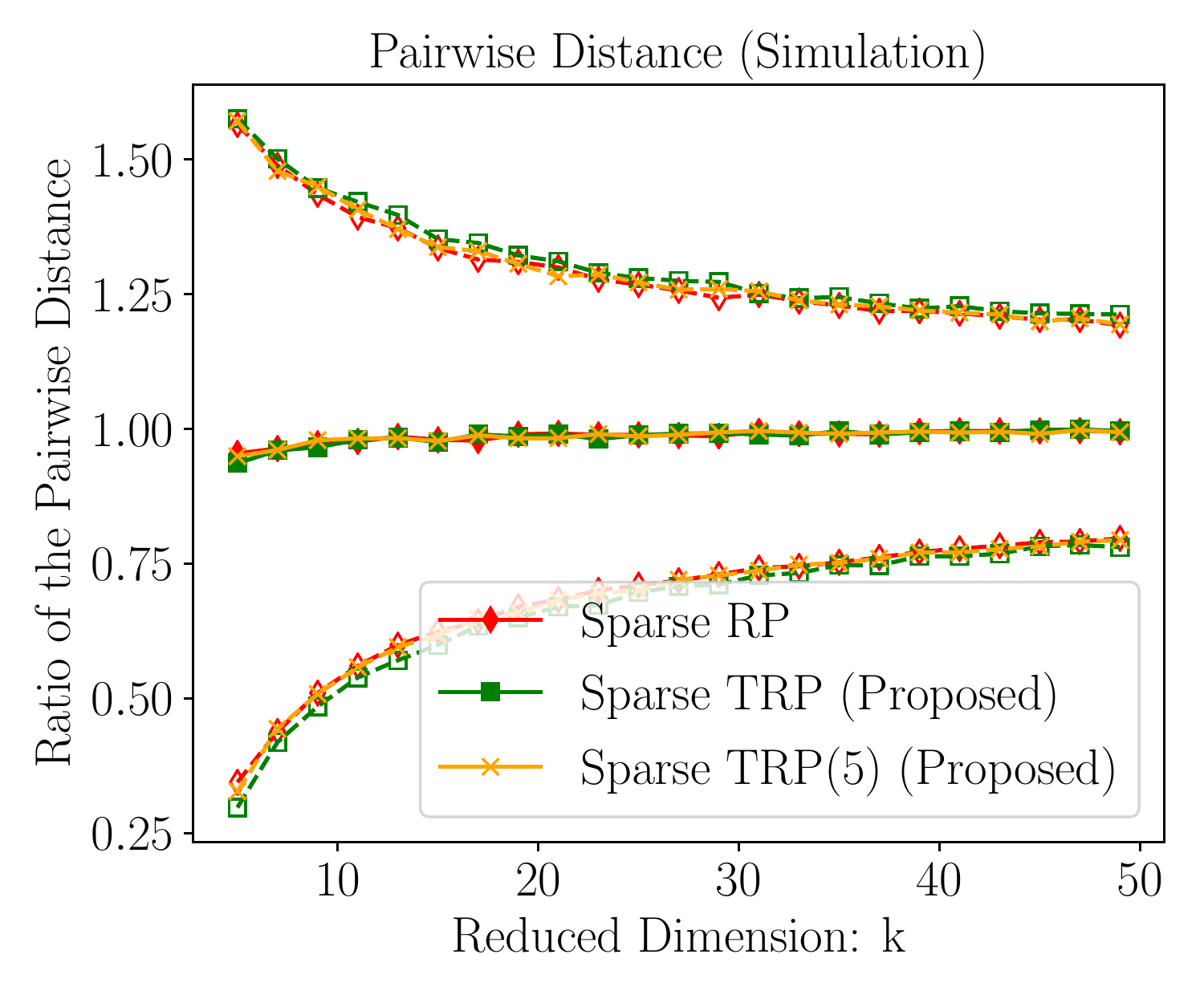}
	\end{subfigure}
	\begin{subfigure}{0.32\textwidth}
		\includegraphics[scale = 0.29]{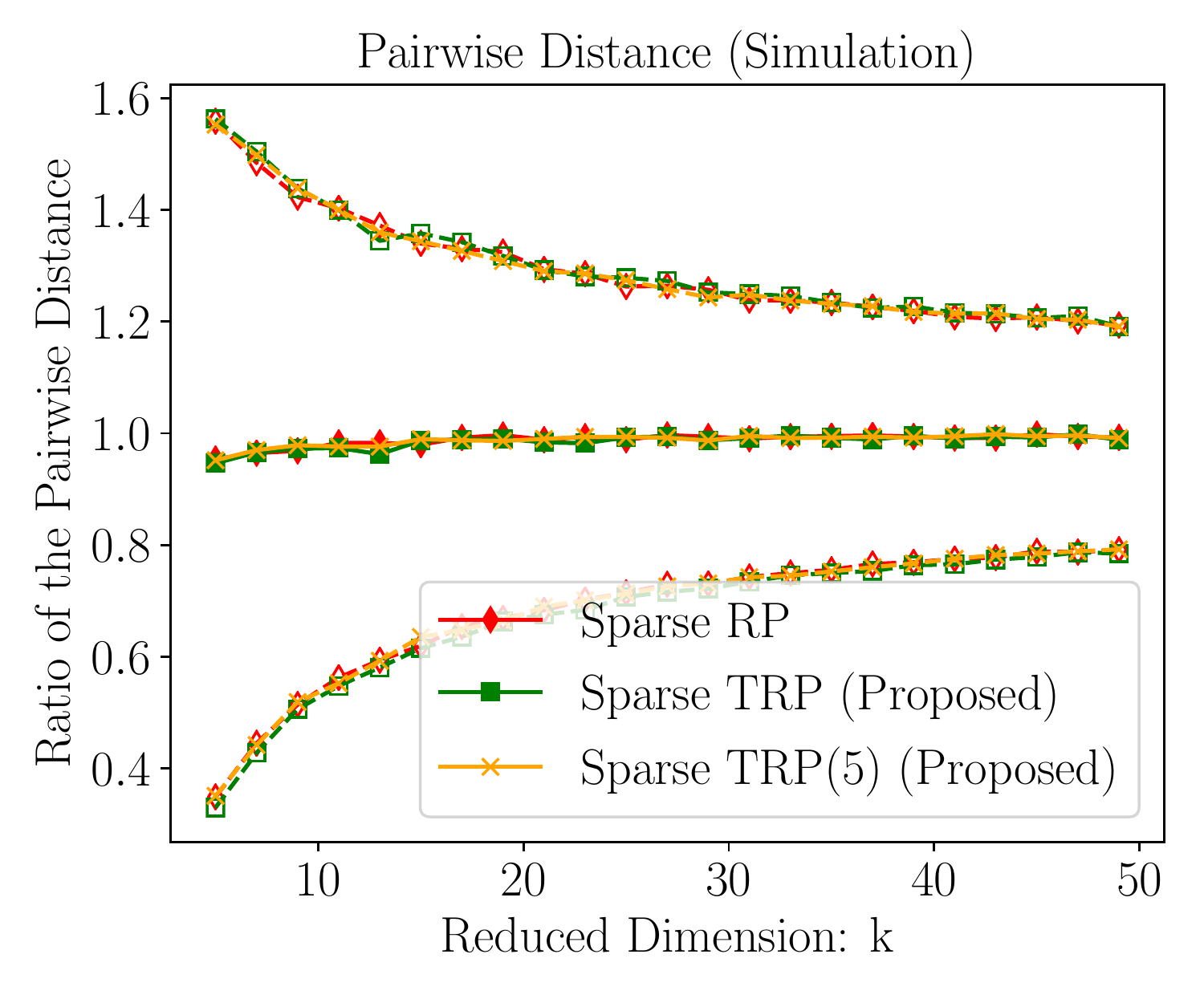}
	\end{subfigure}
	\begin{subfigure}{0.32\textwidth}
		\includegraphics[scale = 0.29]{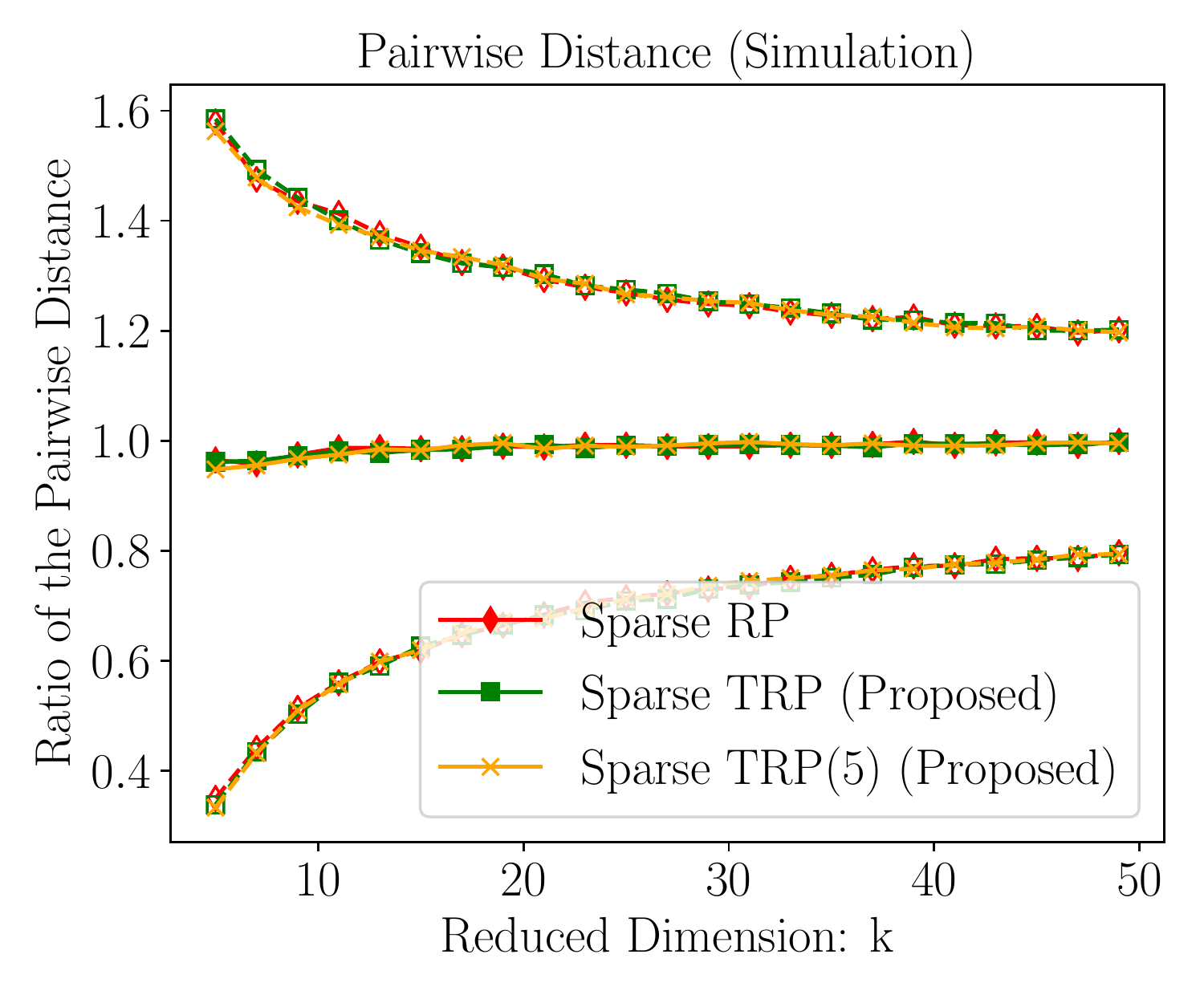}
	\end{subfigure}\\
	\caption{Average ratio of the pairwise distance for simulation data using Sparse RP: \textit{The  plots correspond to the simulation for Sparse RP, TRP, TRP(5) respectively with $n = 20, d = 2500, 10000, 40000$ and each data vector comes from $N(\mathbf{0}, \mathbf{I})$. The dashed line represents the error bar 2 standard deviation away from the average ratio.}} 
	\label{fig:sparse}
\end{figure*}

\begin{figure*}[ht!] 
	\centering
	\begin{subfigure}{0.32\textwidth}
		\includegraphics[scale = 0.29]{figure/dist_sp1_d2500.pdf}
	\end{subfigure}
	\begin{subfigure}{0.32\textwidth}
		\includegraphics[scale = 0.29]{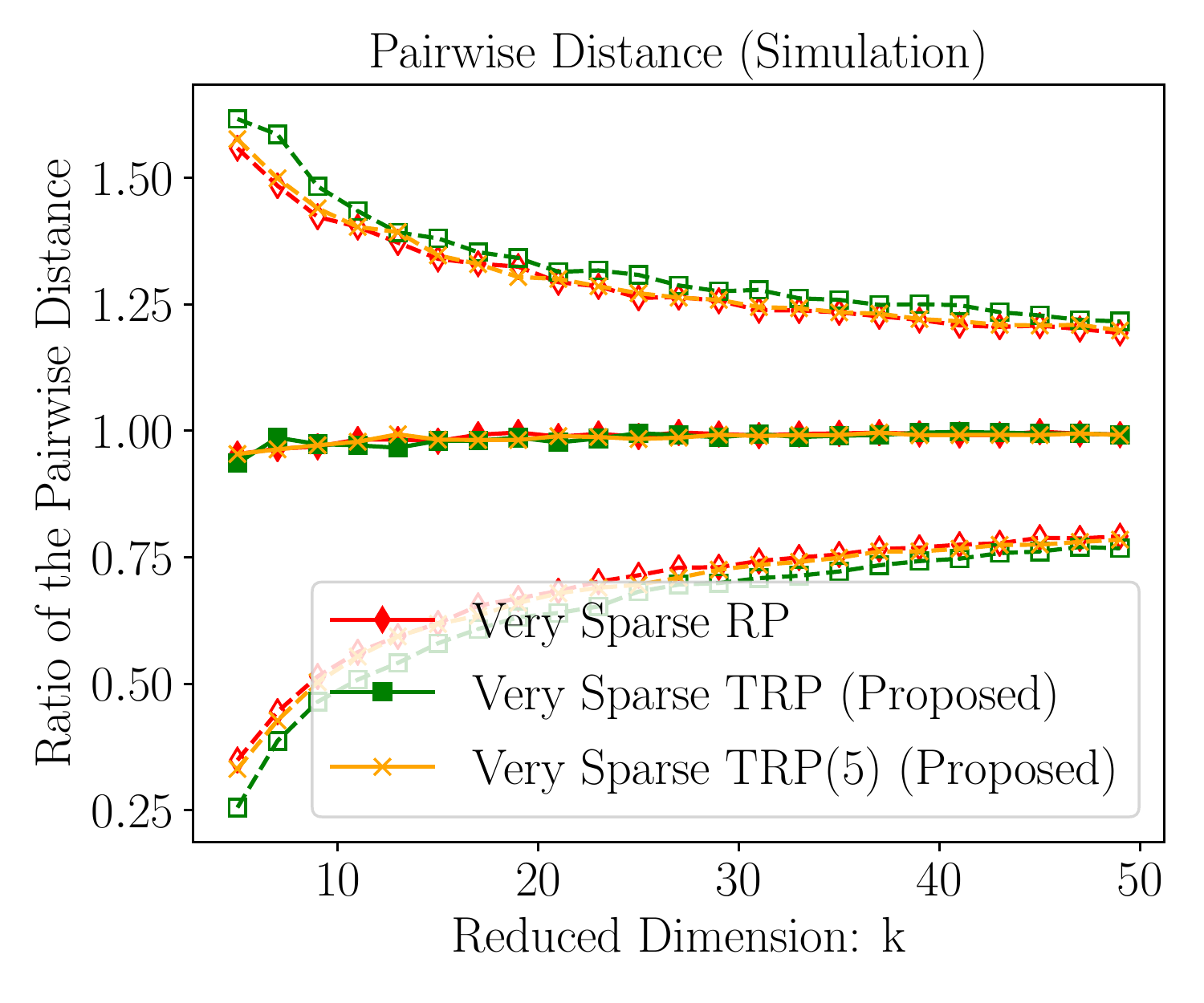}
	\end{subfigure}
	\begin{subfigure}{0.32\textwidth}
		\includegraphics[scale = 0.29]{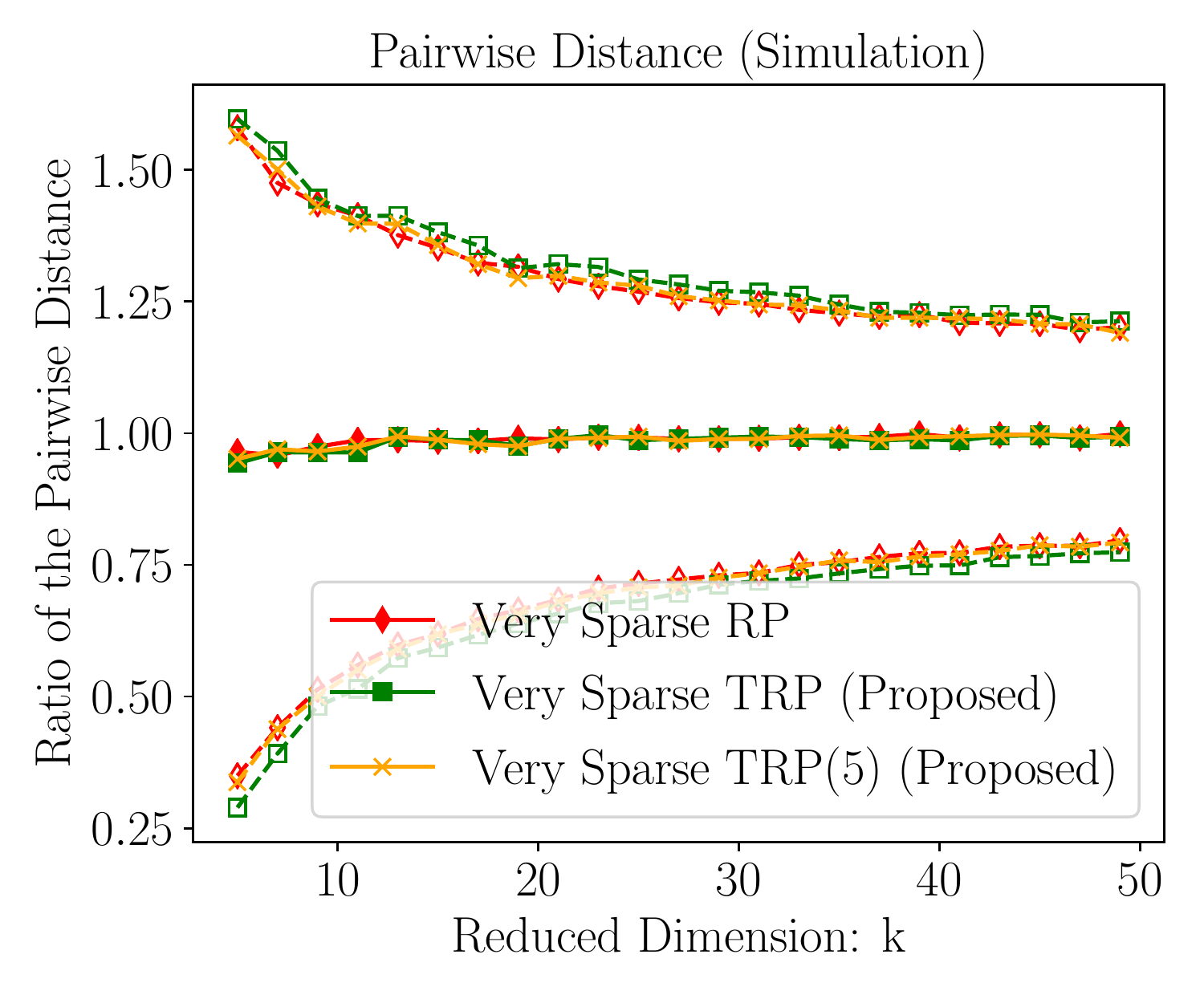}
	\end{subfigure}\\
	\caption{Average ratio of the pairwise distance for simulation data using Very Sparse RP: \textit{The plots correspond to the simulation for Very Sparse RP, TRP, TRP(5) respectively with $n = 20, d = 2500, 10000, 40000$ and each data vector comes from $N(\mathbf{0}, \mathbf{I})$. The dashed line represents the error bar 2 standard deviation away from the average ratio.}} 
	\label{fig:very_sparse}
\end{figure*}

\begin{figure*}[ht!] 
	\centering
	\begin{subfigure}{0.32\textwidth}
		\includegraphics[scale = 0.29]{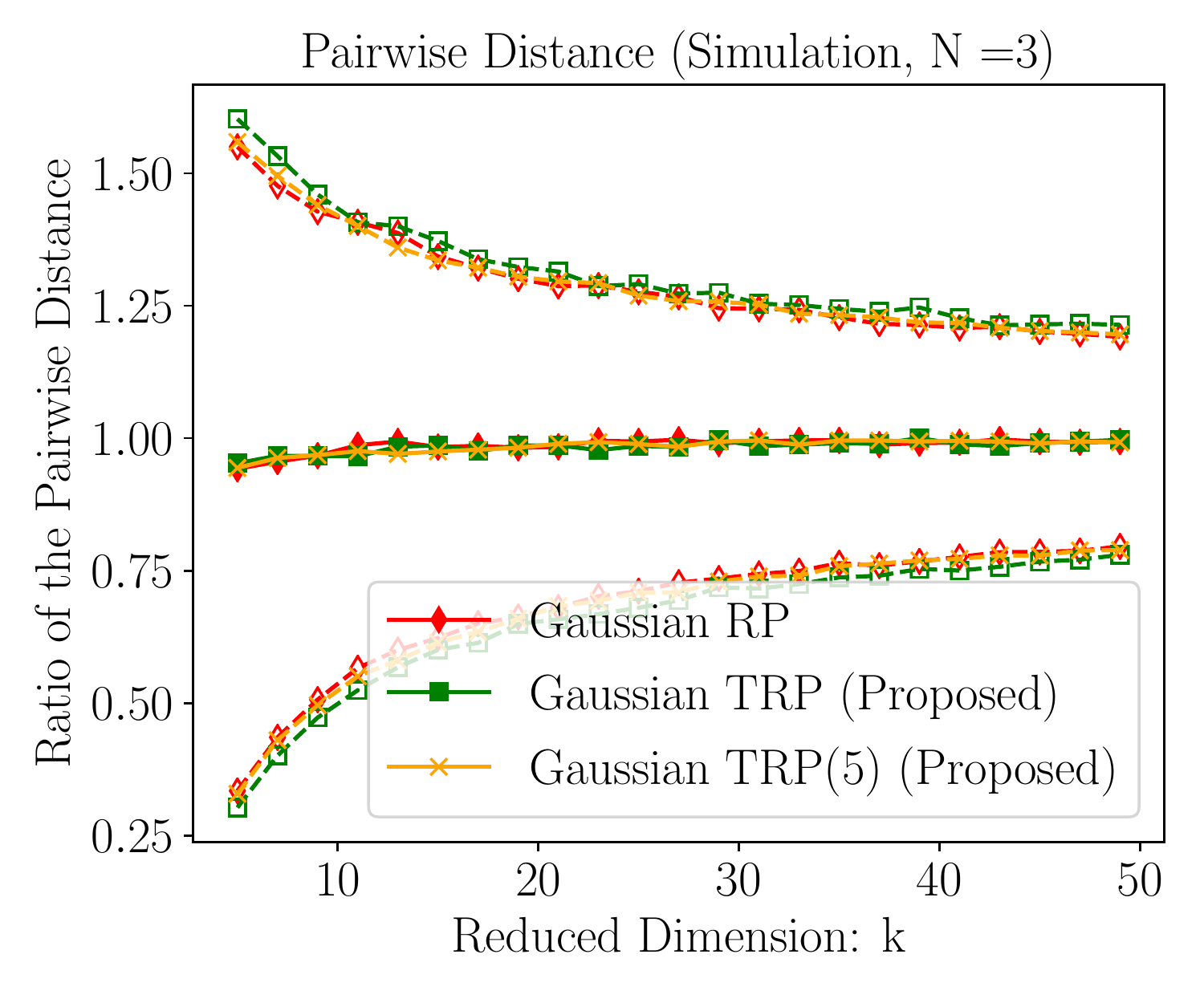}
	\end{subfigure}
	\begin{subfigure}{0.32\textwidth}
		\includegraphics[scale = 0.29]{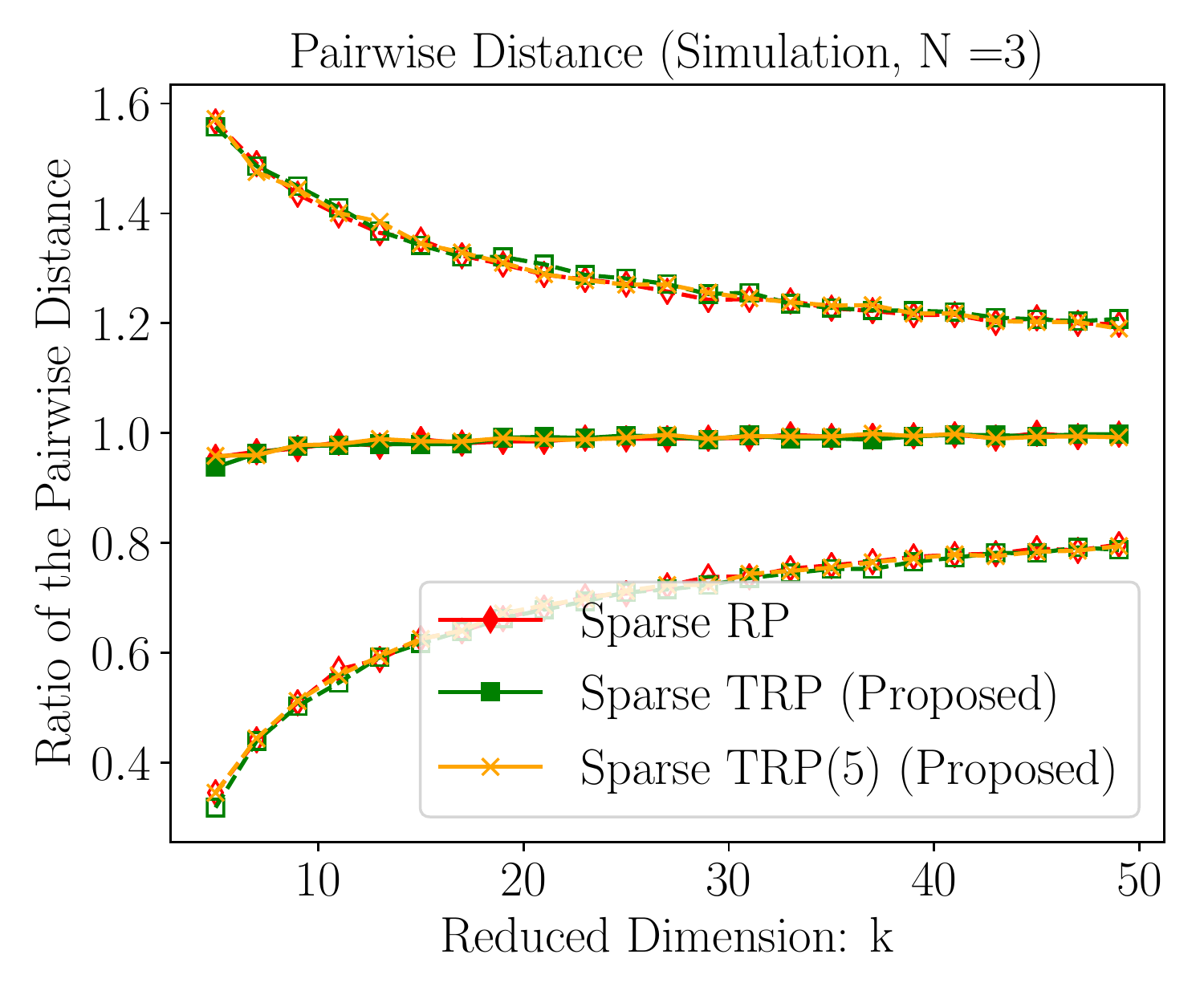}
	\end{subfigure}
	\begin{subfigure}{0.32\textwidth}
		\includegraphics[scale = 0.29]{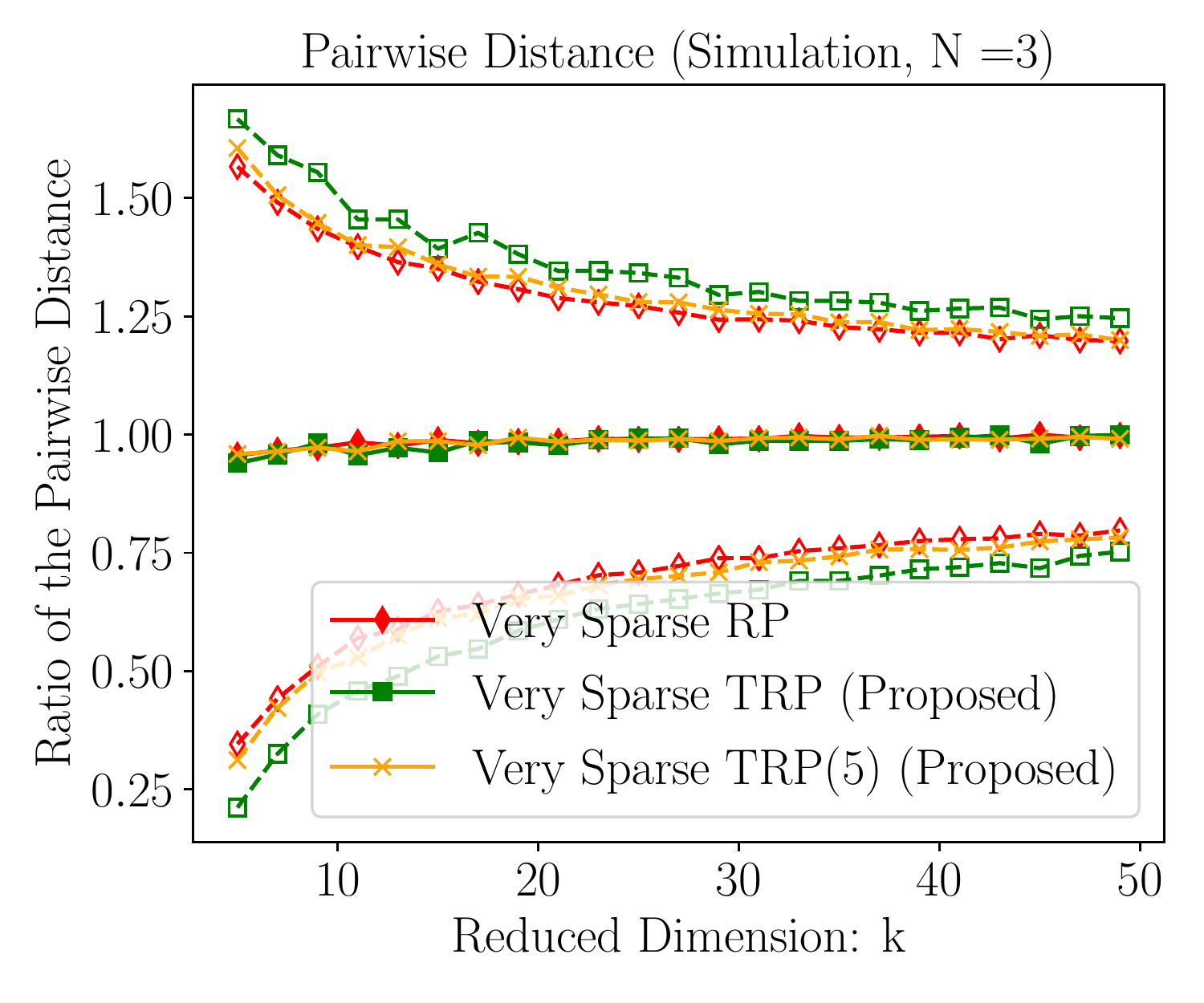}
	\end{subfigure}\\
	\caption{Average ratio of the pairwise distance for simulation data using: \textit{The plots correspond to the simulation for Gaussian, Sparase, Very Sparse RP, TRP, TRP(5) respectively with $n = 20, d = d_1d_2d_3 = 50 \times 50 \times 50 = 125000$ and each data vector comes from $N(\mathbf{0}, \mathbf{I})$. The dashed line represents the error bar 2 standard deviation away from the average ratio.}} 
	\label{fig:triple_krao}
\end{figure*}

\paragraph{Pairwise Cosine Similarity Estimation} 
The second experiment is to estimate the pairwise cosine similarity, i.e. $\frac{\mathbf{x}_i \cdot \mathbf{x}_j}{\|\mathbf{x}_i\|_2 \|\mathbf{x}_j\|_2}$ for $\mathbf{ x}_i, \mathbf{x}_j$. We use both the simulation data ($d = 10000$) and the MNIST data ($d = 784, n = 60000$). We experiment with Gaussian, Sparse, Very Sparse RP, TRP, and TRP(5) with the same setting as above ($k = 50$). We evaluate the performance by the average root mean square error (RMSE). The results is given in Table  \ref{tbl:mnist_inner_prod}, \ref{tbl:sim_inner_prod}.  

\begin{table}[t]
\centering
\begin{tabular}{l|l|l|l}
       & Gaussian        & Sparse          & Very Sparse     \\ \hline
RP     & 0.1409 (0.0015) & 0.1407 (0.0013) & 0.1412 (0.0014) \\ \hline
TRP    & 0.1431 (0.0016) & 0.1431 (0.0015) & 0.1520 (0.0033) \\ \hline
TRP(5) & 0.1412 (0.0012) & 0.1411 (0.0015) & 0.1427 (0.0014)
\end{tabular}
\caption{RMSE for the estimate of the pairwise inner product of the simulation data ($d = 10000, k = 50, n = 100 $), where standard error is in the parentheses.
}\label{tbl:sim_inner_prod}
\end{table}

\section{Application: Sketching}
\label{appendix:sketching}
Beyond random projection, our novel TRP also has an important application in sketching. Sketching is an important technique to accelerate expensive computations with widespread applications, such as regression, low-rank approximation, and graph sparsification, etc. \cite{halko2011finding,woodruff2014sketching} 
% I have to think about how to argue tensor has a natural Khatri-Rao structure. 
The core idea behind sketching is to compress a large dataset, typically a matrix or tensor, into a smaller one by multiplying a random matrix. 
%Since the matrix multiplication is a linear operation, people can usually further extend sketching to distributed and online setting. 
In this section, we will mainly focus on the low-rank matrix approximation problem. Consider a matrix $\mathbf{X} \in \mathbb{R}^{m \times d}$ with rank $r$, 
% the standard technique to find the best rank-$r$ approximation is the randomized Singular Value Decomposition (randomized SVD) \cite{halko2011finding}. Only its first step of finding an approximation to the range of $\mathbf{X}$ involves the sketching, so we will restrict our attention to this step. 
we want to find the best rank-$r$ approximation with the minimal amount of time. The most common method is the randomized singular value decomposition (SVD), whose underlying idea is sketching.

First, we compute the linear sketch $\mathbf{Z} \in \mathbb{R}^{m \times k}$ by $\mathbf{Z} =\mathbf{X}\mathbf{\Omega}$, where $\mathbf{\Omega} \in \mathbb{R}^{d \times r}$ is the random map. Then we compute the QR decomposition of $\mathbf{X}\mathbf{\Omega}$ by $\mathbf{Q}\mathbf{R} = \mathbf{Z}$, where $\mathbf{Q} \in \mathbb{R}^{m \times k}, \mathbf{R} \in \mathbb{R}^{r \times r}$. At the end, we project $\mathbf{X}$ onto the column space of $\mathbf{Q}$, and obtain the approximation $\hat{\mathbf{X}} = \mathbf{Q} \mathbf{Q}^\top \mathbf{X}$.  

With our TRP, we can significantly reduce the storage of the random map, while achieving similar rate of convergence as demonstrated in Figure \ref{fig:col_matrix}. 
hm \ref{alg:var-red-structure-sketching}. And we will delay the theoretical analysis of this method for future works.

\begin{algorithm}[H]
	\caption{Tensor Sketching with Variance Reduction}\label{alg:var-red-structure-sketching}
	\begin{algorithmic}[1]
		\Require $\mathbf{X} \in \mathbb{R}^{m \times d}$, where $d = \prod_{i=1}^N d_n$ and 
		\rm{RMAP} is a user-specified function that generates a random dimension reduction map. $T$ is the number of runs for variance reduction averaging. 
		\Function{SSVR}{$\mathbf{X}, \{d_n\}, k, T, \rm{RMAP}$}
		\For{$t= 1 \dots T$}
		\For{$i = 1 \dots N$} 
		$\mathbf{\Omega}_i^{(t)} = \rm{RMAP}(d_i, k)$
		\EndFor
		\State $\mathbf{\Omega}^{(t)} = \mathbf{\Omega}_1^{(t)} \odot \cdots \odot \mathbf{\Omega}_{N}^{(t)}$
		\State $(\mathbf{Q}^{(t)}, \sim ) = \rm{QR}(\mathbf{X}\mathbf{\Omega}^{(t)})$ 
		\State $\hat{\mathbf{X}}^{(t)} = \mathbf{Q}^{(t)}\mathbf{Q}^{(t)T}\mathbf{X}$
		\EndFor
		\State
		$\hat{\mathbf{X}} = \frac{1}{T}\sum_{t=1}^T \hat{\mathbf{X}}^{(t)}$
		\State \Return $\mathbf{G}$
		\EndFunction
	\end{algorithmic}
\end{algorithm}

Furthermore, the extension of TRP to tensor data is also natural. To be specific, the $n^{th}$ unfolding of a large tensor $\mathscr{X} \in \mathbb{R}^{I_1 \times \cdots \times I_N}$, denoted as $\mathbf{X}^{(n)}$, has dimension $I_n \times I_{(-n)}$, where $I_{(-n)} = \prod_{i \neq n, i \in [N]} I_i$ . To construct a sketch for the unfolding, we need to create a random matrix of size $ I_{(-n)} \times k$. Then, our TRP becomes a natural choice to avoid the otherwise extremely expensive storage cost. For many popular tensor approximation algorithms, it is even necessary to perform sketching for every dimension of the tensor \cite{de2000multilinear,wang2015fast}. 
%such as Higher-Order Orthogonal Iteration \cite{de2000multilinear}, Fast CANDECOMP/PARAFAC decomposition \cite{wang2015fast}, our paper ? 
In the simulation section, we perform experiments for the unfolding of the higher-order order tensor with our structured sketching algorithms (Figure \ref{fig:col_matrix}). For more details in tensor algebra, please refer to \cite{kolda2009tensor}.

\paragraph{Experimental Setup}
In sketching problems, considering a $N$-D tensor $\mathscr{X} \in \mathbb{R}^{I^N}$ with equal length along all dimensions, we want to compare the performance of the low rank approximation with different maps for its first unfolding $\mathbf{X}^{(1)} \in \mathbb{R}^{I \times I^{N-1}}$. 
	
We construct the tensor $\mathscr{X}$ in the following way. Generate a core tensor $\mathscr{C} \in \mathbb{R}^{r^N}$, with each entry $\rm{Unif}([0,1])$. Independently generate $N$ orthogonal arm matrices by first creating $\mathbf{A}_1, \dots, \mathbf{A}_N \in \mathbb{R}^{r \times I}$ and then computing the arm matrices by $(\mathbf{Q}_n, \sim) = \rm{QR}(\mathbf{A}_n)$, for $1 \leq n \leq N$.
\begin{equation}
\mathscr{X} = \mathscr{C} \times_1 \mathbf{Q}_1 \cdots \times_N \mathbf{Q}_N + \sqrt{\frac{0.01 \cdot \|\mathscr{X}^\natural\|_F^2}{I^N}} \mathcal{N}(0,1). \nonumber
\end{equation}
Then, we construct the mode-1 unfolding of $\mathbf{X} = \mathbf{X}^{(1)}$, which has a rank smaller than or equal to $r$. 

In our simulation, we consider the scenarios of 2-D ($900 \times 900$), 3-D ($400 \times 400 \times 400$), 4-D ($100 \times 100 \times 100 \times 100$) tensor data, with corresponding mode-1 unfolding of size $900 \times 900$, $400 \times 160000$, $100 \times 1000000$ respectively and $r = 5$. In each scenario, we compare the performance for Gaussian RP, TRP, and TRP(5) maps with varying $k$ from 5 to 25. The TRP map in these scenarios has 2, 4, 6 components of size $30 \times k$, $20 \times k$, $10 \times k$ respectively. And the number of runs variance reduction averaging is $T = 5$. In the end, we evaluate the performance by generating the random matrix 100 times and compute the relative error $\frac{\|\mathbf{X} - \hat{\mathbf{X}}\|}{\|\mathbf{X}\|}$, and constructing a 95\% confidence interval for it.

\paragraph{Result}From Figure \ref{fig:col_matrix}, we can observe that the relative error decreases as $k$ increases as expected for all dimension reduction maps. The difference of the performance between the Khatri-Rao map and Gaussian map is small when $N = 2$, but increases when $N$ increases, whereas the Khatri-Rao variance reduced method is particularly effective producing strictly better performance than the other two. 

\begin{figure*}[ht!]
	\centering
	\begin{subfigure}{0.32\textwidth}
		\includegraphics[scale = 0.29]{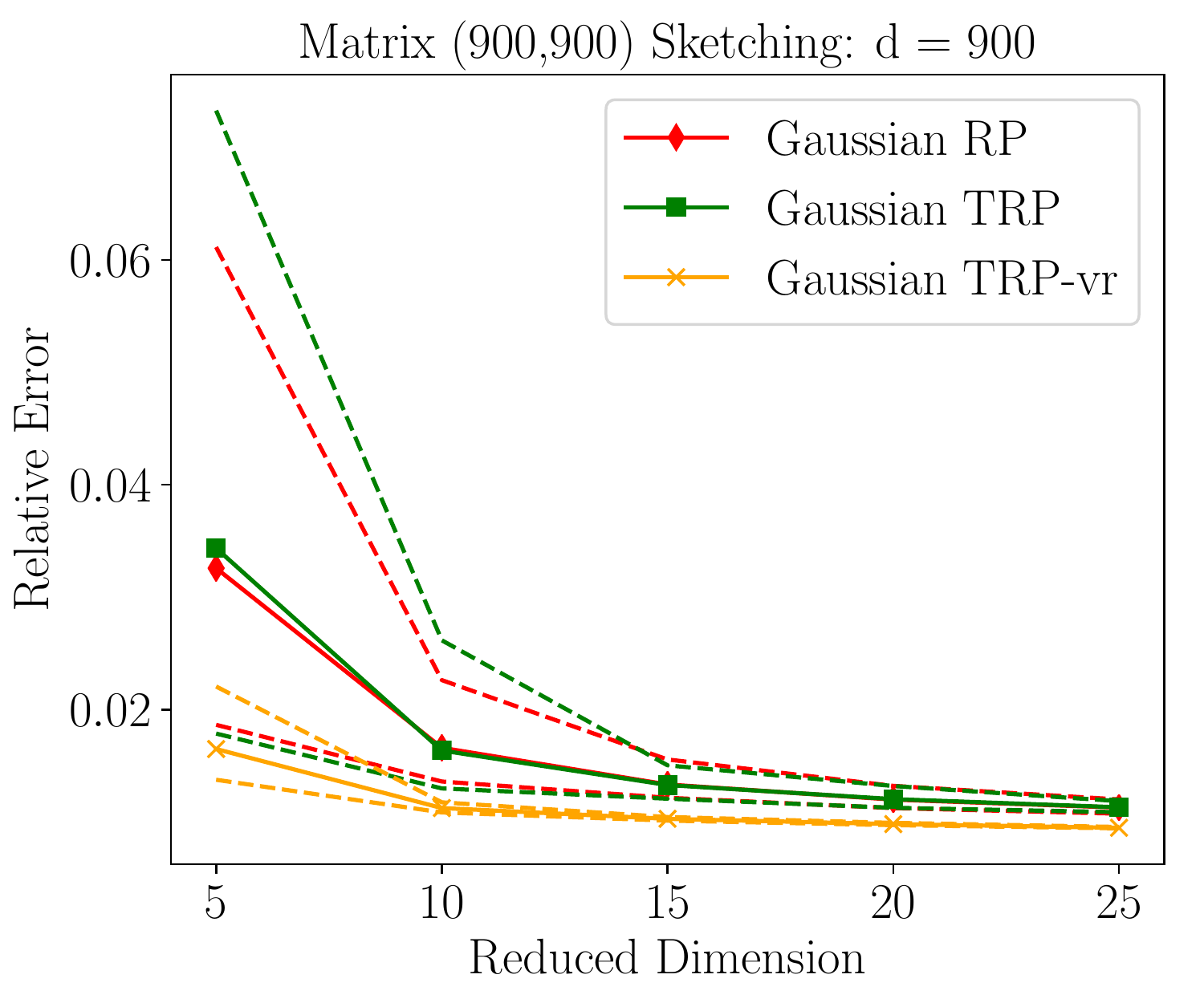}
	\end{subfigure}
	\begin{subfigure}{0.32\textwidth}
		\includegraphics[scale = 0.29]{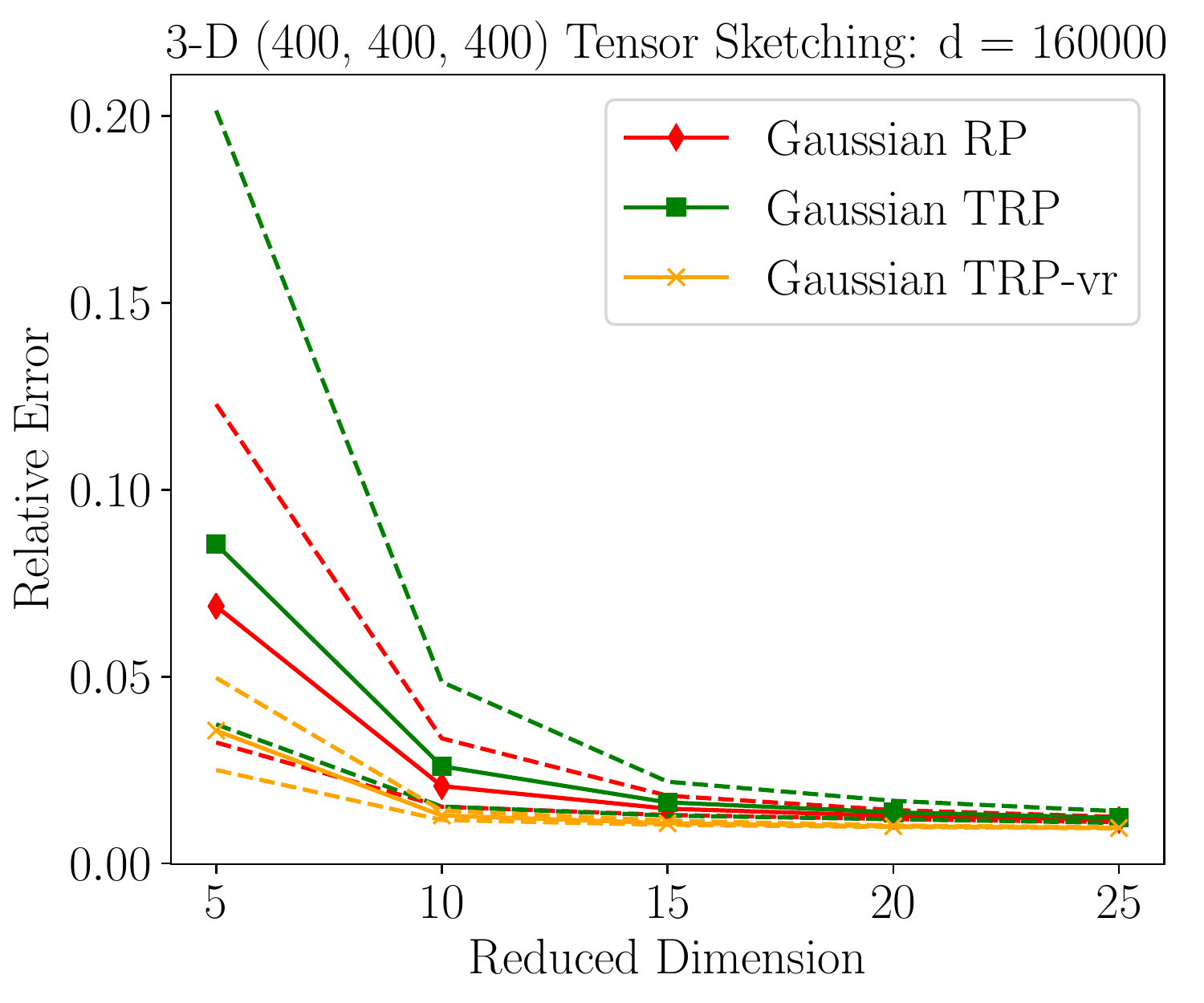}
	\end{subfigure}
	\begin{subfigure}{0.32\textwidth}
		\includegraphics[scale = 0.29]{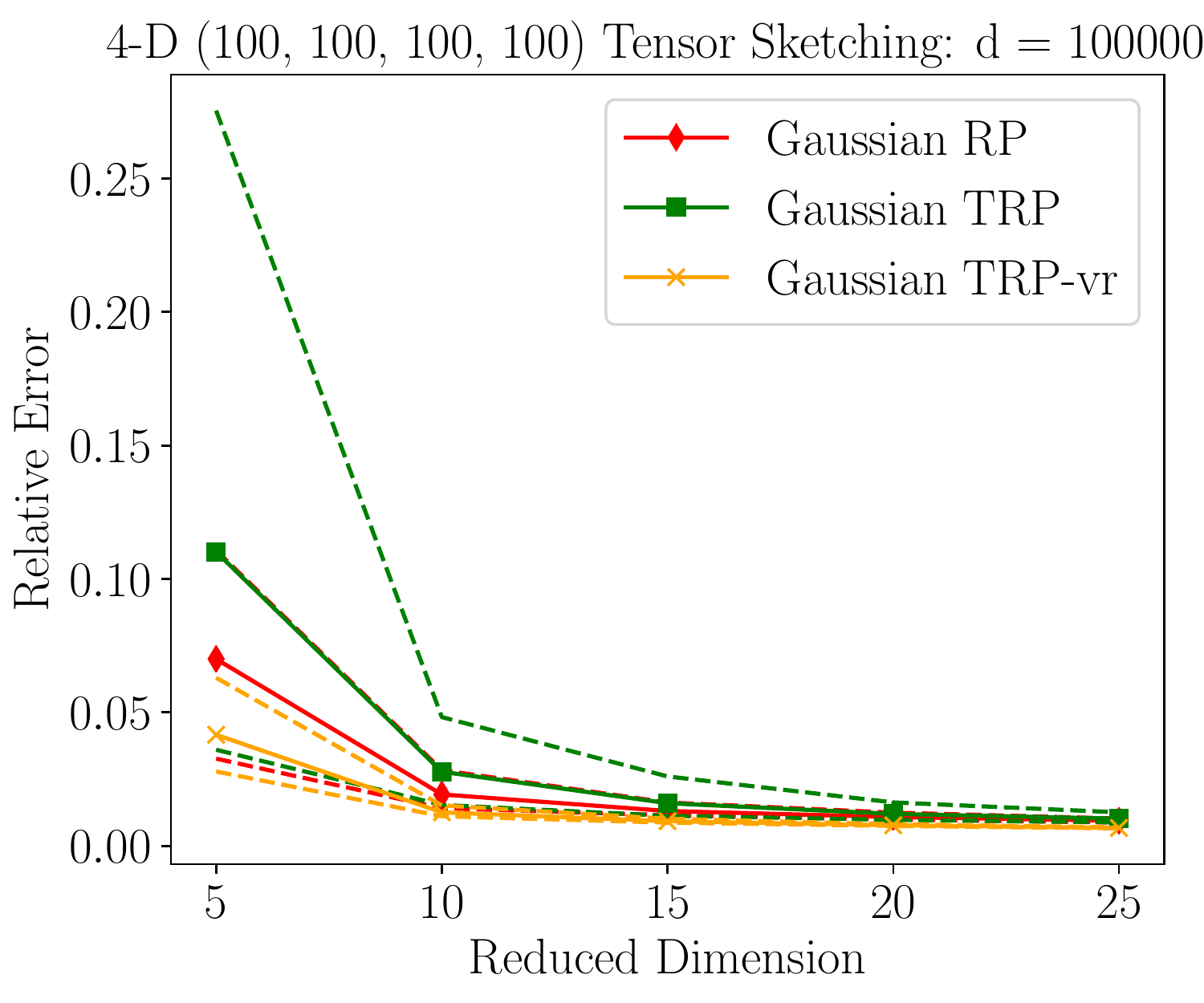}
	\end{subfigure}\\
	\caption{Relative Error for the low-rank tensor unfolding approximation: \textit{we compare the relative errors for low-rank tensor approximation with different input size: 2-D ($900 \times 900$), 3-D ($400 \times 400 \times 400$), 4-D ($100 \times 100 \times 100 \times 100$). In each setting, we compare the performance of Gaussian RP, TRP, and TRP(5). The dashed line stands for the 95\% confidence interval.}}
	\label{fig:col_matrix}
\end{figure*}

\section{Technical Lemmas}
In this section, we list some technical lemmas used in this paper.
These lemmas concern the tail probability of sub-Gaussian or generalized sub-exponential variables.

\begin{definition}
	\label{def:sub-gaussian}
	A random variable $x$ is called sub-Gaussian if $\mathbb{E} |x|^p = \mathcal{O}(p^{p/2})$ when $p\rightarrow \infty$. With this, we define sub-Gaussian norm for $x$ (less than infinity) as
	\begin{equation}
	\|x\|_{\varphi_2} = \sup_{p\ge 1} p^{-1/2} (\mathbb{E} |x|^p)^{1/p}.
	\end{equation}
\end{definition}

Note that for Bernoulli random variable, i.e., $\{-1,1\}$ with prob. $\{\frac{1}{2},\frac{1}{2} \}$,  $\varphi_2=1$; any bounded random variable with absolute value less than $M>0$ has $\varphi_2\le M$.  For standard Gaussian random variable, $\varphi_2=1$.

\begin{lem}
\label{lemma:hanson_wright}
(Hanson-Wright Inequality) Let $\mathbf{x} = (x_1,\cdots , x_n)\in \mathbb{R}^n$ be a random
vector with independent entries $x_i$ that satisfy $\mathbb{E} \mathbf{x}_i = 0$ and $\varphi_2(x_1)\le K$. Let $A$
be an $n\times n$ matrix. Then, for every $\eta \ge 0$, there exists a general constant $c$ s.t.
\begin{equation}
\mathbb{P}\left(|\mathbf{x^\top A x} - \mathbb{E}\mathbf{x^\top A x}|\ge \eta\right)\le 2\exp\left[-c\min\left\{\frac{\eta}{K^2\|A\|}, \frac{\eta^2}{\|A\|_F^2 K^4}\right\}\right]. \nonumber
\end{equation}
\end{lem}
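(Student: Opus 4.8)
The plan is to bound the moment generating function (MGF) of the centered quadratic form $S := \mathbf{x}^\top A \mathbf{x} - \mathbb{E}\,\mathbf{x}^\top A \mathbf{x}$ and then optimize a Chernoff bound, following the decoupling strategy of Rudelson and Vershynin. First I would split the form into its diagonal and off-diagonal parts,
\begin{equation}
S = \underbrace{\sum_{i} A_{ii}\bigl(x_i^2 - \mathbb{E} x_i^2\bigr)}_{S_{\mathrm{diag}}} \;+\; \underbrace{\sum_{i\neq j} A_{ij}\, x_i x_j}_{S_{\mathrm{off}}},\nonumber
\end{equation}
and note that by Cauchy--Schwarz it suffices to control $\mathbb{E}\exp(2\lambda S_{\mathrm{diag}})$ and $\mathbb{E}\exp(2\lambda S_{\mathrm{off}})$ separately and then combine the two resulting tail estimates with a union bound.

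For the diagonal term, the summands $A_{ii}(x_i^2-\mathbb{E} x_i^2)$ are independent, mean zero, and sub-exponential with sub-exponential norm $\lesssim K^2|A_{ii}|$, since the square of a sub-Gaussian variable is sub-exponential. A standard Bernstein bound for sums of independent sub-exponential variables then gives a tail of the form $\exp\bigl[-c\min\bigl(\eta^2/(K^4\sum_i A_{ii}^2),\,\eta/(K^2\max_i|A_{ii}|)\bigr)\bigr]$, and I would finish this branch using the elementary facts $\sum_i A_{ii}^2 \le \|A\|_F^2$ and $\max_i|A_{ii}| \le \|A\|$.

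The off-diagonal term is the crux. Here I would first apply a decoupling inequality to replace $S_{\mathrm{off}}$ by the bilinear form $\mathbf{x}^\top A\,\mathbf{x}'$, where $\mathbf{x}'$ is an independent copy of $\mathbf{x}$: applied to the convex function $t\mapsto e^{\lambda t}$ this yields $\mathbb{E}\exp(\lambda S_{\mathrm{off}}) \le \mathbb{E}\exp(4\lambda\,\mathbf{x}^\top A \mathbf{x}')$. Conditioning on $\mathbf{x}$, the quantity $\mathbf{x}^\top A \mathbf{x}'$ is a linear combination of the independent sub-Gaussian entries of $\mathbf{x}'$ with weight vector $A^\top\mathbf{x}$, so its conditional MGF is at most $\exp(C\lambda^2 K^2\|A^\top\mathbf{x}\|_2^2)$. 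Taking expectation over $\mathbf{x}$ leaves the MGF of the quadratic form $\mathbf{x}^\top (AA^\top)\mathbf{x}$, and controlling this is the main obstacle.

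To bound $\mathbb{E}\exp(\mu\,\mathbf{x}^\top B\mathbf{x})$ with $B := AA^\top \succeq 0$ and $\mu := C\lambda^2 K^2$, I would compare against a standard Gaussian vector $\mathbf{g}$: because the entries of $\mathbf{x}$ are sub-Gaussian, the MGF is dominated up to constants by $\mathbb{E}\exp(C'\mu\,\mathbf{g}^\top B \mathbf{g})$, which by rotation invariance diagonalizes as $\prod_k (1-2C'\mu\lambda_k(B))^{-1/2}$. Using $-\tfrac12\log(1-t)\le t$ for $t$ bounded away from $1$, this is at most $\exp(C''\mu\,\mathrm{tr}(B)) = \exp(C''\mu\|A\|_F^2)$, valid precisely in the regime $\mu\|B\| = \mu\|A\|^2 \le c$, i.e. $\lambda \lesssim 1/(K\|A\|)$. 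Substituting $\mu = C\lambda^2 K^2$ gives $\mathbb{E}\exp(\lambda S_{\mathrm{off}}) \le \exp(C'''\lambda^2 K^2\|A\|_F^2)$ on that range, and optimizing the Chernoff bound over $\lambda$ produces exactly the two-regime tail $\exp\bigl[-c\min\bigl(\eta^2/(K^4\|A\|_F^2),\,\eta/(K^2\|A\|)\bigr)\bigr]$. I would then combine the diagonal and off-diagonal estimates by a union bound, repeat the argument for $-S$ to handle the absolute value, and absorb constants. I expect this Gaussian-comparison and diagonalization step to be the delicate part, since sub-Gaussian entries are not rotation invariant and the admissible range of $\mu$ is exactly what creates the crossover between the sub-Gaussian and sub-exponential regimes.
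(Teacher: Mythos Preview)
The paper does not actually prove this lemma: its entire proof reads ``Please refer to \cite{rudelson2013hanson}.'' Your sketch is essentially the argument of that reference (decouple into diagonal and off-diagonal parts, handle the diagonal by Bernstein, and handle the off-diagonal by decoupling followed by Gaussian comparison), so in spirit you are reproducing exactly what the paper defers to.

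One point worth tightening: the step where you pass from $\mathbb{E}_{\mathbf{x}}\exp(\mu\,\mathbf{x}^\top B\mathbf{x})$ with $B=AA^\top$ to a Gaussian quadratic form is not immediate, since rotation invariance fails for general sub-Gaussian $\mathbf{x}$. In Rudelson--Vershynin this is handled by doing the Gaussian comparison at the \emph{linear} stage on both sides of the decoupled bilinear form $\mathbf{x}^\top A\mathbf{x}'$ (first replace $\mathbf{x}'$ by $\mathbf{g}'$ conditionally on $\mathbf{x}$, then replace $\mathbf{x}$ by $\mathbf{g}$ conditionally on $\mathbf{g}'$), arriving at the genuinely Gaussian chaos $\mathbf{g}^\top A\mathbf{g}'$ which can be diagonalized via the SVD of $A$. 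Your version can be repaired the same way, or equivalently by inserting the identity $e^{\mu\|A^\top\mathbf{x}\|^2}=\mathbb{E}_{\mathbf{g}}e^{\sqrt{2\mu}\langle \mathbf{g},A^\top\mathbf{x}\rangle}$ before taking the expectation over $\mathbf{x}$; either route lands on the same two-regime bound you state. You already flag this as the delicate part, so this is a refinement rather than a gap.
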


\begin{proof}
	Please refer to \cite{rudelson2013hanson}
\end{proof}

\begin{lem}
\label{lemma:hanson-wright-sub-exponential}
Let $\mathbf{x}$ be a random
variable whose tail probability satisfies for every $\eta \ge 0$, there exists a constant $c_1$ s.t.
\begin{equation}
\mathbb{P}\left(|x|\ge \eta\right)\le 2\exp\left[-c_1\min\left(\eta,  \eta^2\right)\right]. \nonumber
\end{equation}
Then for any $k\ge 1$, $x$ satisfies generalized sub-exponential moment condition \ref{def:generalized-sub-exponential-mc} with $\alpha = 1$, i.e.,
\begin{equation}
\mathbb{E} |x|^k \le (Ck)^k, \nonumber
\end{equation}\label{eq:sub-exponential-moment-condition}
where $C= 1+\frac{1}{c_1}$.
\end{lem}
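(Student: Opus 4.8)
The plan is to convert the sub-exponential tail bound into the desired moment bound via the standard layer-cake (tail integration) identity
\[
\mathbb{E}|x|^k = \int_0^\infty k\eta^{k-1}\,\mathbb{P}(|x|\ge\eta)\,d\eta,
\]
and then to substitute the hypothesized tail estimate $\mathbb{P}(|x|\ge\eta)\le 2\exp[-c_1\min(\eta,\eta^2)]$. The entire content of the lemma is that an essentially exponential tail (which is what $\min(\eta,\eta^2)$ degenerates to in the regime $\eta\ge 1$ that dominates the moment integral for large $k$) forces $k$-th moments growing no faster than $k^k$ up to a constant $C^k$, i.e.\ the generalized sub-exponential moment condition of Definition~\ref{def:generalized-sub-exponential-mc} with $\alpha=1$.

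First I would split the tail integral at $\eta=1$ to handle the kink in $\min(\eta,\eta^2)$. On $[0,1]$ we have $\min(\eta,\eta^2)=\eta^2$, so $\exp[-c_1\eta^2]\le 1$ and this piece contributes at most $\int_0^1 2k\eta^{k-1}\,d\eta = 2$; here I would also use the trivial bound $\mathbb{P}(|x|\ge\eta)\le 1$ near the origin, since the stated tail exceeds $1$ for small $\eta$, which tightens the contribution of this region. On $[1,\infty)$ we have $\min(\eta,\eta^2)=\eta$, so that piece is at most
\[
\int_1^\infty 2k\eta^{k-1}e^{-c_1\eta}\,d\eta \le 2k\int_0^\infty \eta^{k-1}e^{-c_1\eta}\,d\eta = \frac{2k\,\Gamma(k)}{c_1^k} = \frac{2\,k!}{c_1^k},
\]
recognizing the inner integral as a Gamma integral.

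Finally I would combine the two pieces into $\mathbb{E}|x|^k \le 2 + 2k!/c_1^k$ and bound the right-hand side by $(Ck)^k$ using $k!\le k^k$. The hard part will be pinning down \emph{exactly} the claimed constant $C=1+1/c_1$: the crude bound above overshoots for small $k$ (already at $k=1$ it gives $2+2/c_1$ against the target $1+1/c_1$), because of the leading factor $2$ and the additive $2$ from the region near the origin. The careful step is to absorb these by invoking the trivial cap $\mathbb{P}(|x|\ge\eta)\le 1$ precisely on the set where $2\exp[-c_1\eta^2]\ge 1$, together with the sharper estimate $k!\le (k/e)^k\sqrt{2\pi k}$, which leaves ample slack for large $k$ so that the polynomial and constant corrections are dominated by $(Ck)^k$. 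Once the constant is verified in this way, the bound $\mathbb{E}|x|^k\le(Ck)^k$ holds for all integers $k\ge 1$, establishing the generalized sub-exponential moment condition with $\alpha=1$ and the explicit $C=1+1/c_1$.
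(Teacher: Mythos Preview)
Your approach is essentially identical to the paper's: apply the layer-cake identity, split at $\eta=1$, use the trivial bound $\mathbb{P}\le 1$ on $[0,1]$ (giving the contribution $1$) and the Gamma integral on $[1,\infty)$, then invoke $k!\le k^k$ together with the elementary inequality $(1+1/c_1^k)^{1/k}\le 1+1/c_1$ to extract $C=1+1/c_1$. Your caution about the exact constant is well placed---the paper's own argument quietly drops the factor of $2$ from the tail bound when passing to the final line---so your plan is at least as rigorous as what appears in the paper.
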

\begin{proof}
\begin{equation}
\begin{aligned}
&\mathbb{E} |x|^k =  \int_{0}^1 kx^{k-1} 2\exp[-c_1x^2]dx + \int_{1}^\infty kx^{k-1} 2\exp[-c_1x]dx \\
&\le 1+ + \frac{1}{c_1^k}\int_{0}^\infty ky^{k-1} 2\exp[-y]dy\\
&= 1+ \frac{1}{c_1^k} k\Gamma(k-1)\le \left[1+\frac{1}{c_1^k}\right] k^k.
\end{aligned}
\end{equation}
Noticing $\left[1+\frac{1}{c_1^k}\right]^{1/k} \le 1+\frac{1}{c_1}$, we finish the proof.
\end{proof}

%\clearpage

\end{appendices}
\bibliographystyle{alpha}

\end{document}